\theoremstyle{plain}
\newtheorem{theorem}{Theorem}[section]
\newtheorem{lemma}[theorem]{Lemma}
\theoremstyle{definition}
\newtheorem*{definition}{Definition}
\newtheorem{example}[theorem]{Example}
\theoremstyle{remark}
\newcommand{\N}{{\mathbb N}}
\newcommand{\A}{{\mathcal A}}
\newcommand{\B}{{\mathcal B}}
\newcommand{\F}{{\mathcal F}}
\newcommand{\cpx}{{C_{p}(X)}}
\newcommand{\cpxg}{{C_{p}(X,G)}}
\newcommand{\cpyg}{{C_{p}(Y,G)}}
\newcommand{\cp}{{C_{p}}}
\begin{document}

\title[Properties of the space of group-valued continuous functions]
{Properties of the space of group-valued continuous functions}

\author[S. Mishra]{Sanjay Mishra}
\address{Sanjay Mishra \\ Department of  Mathematics, \\ Amity School of Applied Sciences, \\ Amity University Lucknow Campus, \\ Uttar Pradesh, India}
\email{drsanjaymishra1@gmail.com}

\author[P. Pandey]{Pankaj Pandey}
\address{Pankaj Pandey \\ Department of Mathematics \\  Lovely Professional University, \\ Punjab, India}
\email{pankaj.25257@lpu.co.in}

\author[S. Ravindran]{Sreeram Ravindran}
\address{Sreeram Ravindran \\ Department of Mathematics \\  Lovely Professional University, \\ Punjab, India}
\email{sreeram.r.warrier@gmail.com}


\subjclass[2020]{Primary:54C35, Secondary:54A25 and 54H11}
\keywords{Function space, Topology of pointwise convergence, Fan tightness, Topological group, Menger property, Rothberger property and Hurewicz property}

\begin{abstract}
In this paper, we find necessary and sufficient conditions for countable fan tightness and countable strong fan tightness of the space (briefly, $C_{p}(X,G)$) of all group-valued continuous functions endowed with the topology of pointwise convergence in term of Menger property and Rothberger property respectively. Furthermore, we establish a relationship between countable fan tightness, the Reznichenko property and the Hurewicz property for the space $C_{p}(X,G)$. In addition to this we prove that the Menger property is preserve during $G$-equivalence of topological spaces. Through this paper, we establish a general result regarding fan tightness of $C_{p}(X,G)$ and Hurewicz number of the space $X^{n}$ for every natural number $n$. Finally, we study the monolithicity of the space $C_{p}(X,G)$.
\end{abstract}

\maketitle

\section{Introduction and Preliminaries}\label{s:Introduction and Preliminaries}
In 1992, Arkhangelski \cite{Arhangelskii1992} introduced a theory called $\cp$-theory for topological function spaces. Subsequently, many mathematicians made considerable efforts to enhance $\cp$-theory, giving it the sophistication and elegance it currently has. Arkhangelsky's PhD student, Tkachuk, has written a wide range of books \cite{Tkachuk2010, Tkachuk2014, Tkachuk2015, Tkachuk2016}, which serve as a vast compilation of many findings related to $\cp$-theory. In these book series, Tkachuk has produced a large collection of open problems that serve as an attractive stimulus for further research, not only to advance $\cp$-theory but also to meet the challenges of other branches of mathematics. Building on this line of inquiry, a recent monograph by McCoy  \cite{McCoy2018} highlights the broad aspects of the $\cp$-theory, and provides information about its more general implications. His study includes a comprehensive exploration of many properties related to the space of continuous functions from one topological space to another, culminating in the examination of uniform, fine, and graph topology. More recently, in 2023, Mishra and Bhaumik \cite{Mishra2023} and Aaliya and Mishra \cite{Aaliya2023, Aaliya2023a, Aaliya2024} studied properties of topological function spaces under Cauchy convergence topology and regular topology, respectively. In 2024, Bishnoi and Mishra \cite{Bishnoi2024} studied cardinal properties of the space of quasi-continuous functions under topology of uniform convergence on compact subsets. These studies also give an idea for further studies on topological function space by applying the new structure. However, such a study was initiated in 2010 by Shakhmatov and Spevak \cite{Shakhmatov2010}, considering a new structure on topological function spaces as topological groups. In this paper \cite{Shakhmatov2010} authors defined point-wise convergence topology on the space of group-valued continuous functions (denoted by $\cpxg$) and further studied on good numbers of properties of such space along with preservation of properties during some equivalencies like $G$-equivalence and $\mathbb{T}$-equivalence. In 2011, Kocinac \cite{Kocinac2011} extended the closure-type properties of the function spaces of continuous real-valued functions on a Tykonoff space $X$ to the group-valued function spaces $\cpxg$.

In this paper we are going to study about countable fan tightness, countable strong fan tightness and fan tightness of the space $\cpxg$ in terms of Menger property, Rothberger property, Reznichenko property and Hurewicz property. In 2020, Sakai \cite{Sakai2020} studied about the preservation of the Menger property during $l$-equivalence between spaces of real-valued continuous functions. Furthermore we examined the preservation of the Menger property for the space of group-valued continuous functions $\cpxg$ in the light of $G$-equivalence which is a more general case of $l$-equivalence. We also studied the monolithicity of the space $\cpxg$.

We are following most of the notation and terminology from \cite{Engelking1989} and \cite{Shakhmatov2010}, unless we state otherwise. The study assumes that all topological spaces under consideration are Tychonoff, meaning they are both completely regular and satisfy the $T_{1}$ separation axiom. Additionally, it is assumed that these spaces are non-empty. Furthermore, all topological groups considered are assumed to be Hausdorff. The $\N$ is the set of all natural numbers, $\omega$ is the least non-zero limit ordinal. For a space $X$, we denote by $\cpxg$ the space of all group-valued continuous functions with the topology of point-wise convergence, where $G$ is topological group.

The selection principle refers to a guiding principle that affirms the feasibility of obtaining mathematically significant objects by selecting elements from predetermined sequences of sets. Selection theory primarily involves the characterization of covering properties, measure-theoretic properties, category-theoretic properties, and local properties within topological spaces, with a particular emphasis on function spaces. These theories provide a framework for understanding and analyzing the behavior of sets and functions with respect to these properties. Frequently, employing selection theory to characterize mathematical properties presents a challenging endeavor, which often yields fresh insights into the distinctive nature of the property being studied. In this paper we are going to apply the results of selection principles to discuss the tightness of the $\cpxg$ space. In order to better understand our results we are going to briefly recall some important concepts such as selection principles, Menger, Hurewicz, Rothberger and Reznichenko properties, but for detail readers can refer to the following paper \cite{Scheepers1996,Fremlin1988}.

\begin{definition}[Selection principles]\label{d:Selection principles}[\cite{Kocinac2003}, Section 1.1]
Let $S$ be an infinite set and let $\A$ and $\B$ are families of the subsets of $S$. The two selection hypotheses $\textsf{S}_{1}(\A, \B)$ and $\textsf{S}_{fin}(\A, \B)$ are defined as follows:
\begin{enumerate}
\item Notation $\textsf{S}_{1}(\A, \B)$ denote the statement:Corresponding to every sequence $(A_{n})_{n \in \N} \in \A$ we can find a sequence $(b_{n})_{n \in \N}\in  A_{n}$ and $\{b_{n} \colon n \in \N\} \in \B$ for every $n\in \N$.

\item Notation $\textsf{S}_{fin}(\A, \B)$ denote the statement: Corresponding to every sequence $(A_{n})_{n \in \N} \in \A$ we can find a sequence of finite sets $B_{n} \subset A_{n}$, and $\bigcup_{n < \infty}B_{n} \in \B$ for every $n\in \N$.
\end{enumerate}
\end{definition}

\begin{definition}[Menger space]\label{d:Menger space}[\cite{Sakai2020}, Definition 1.2]
A space $X$ is Menger if for every sequence  of open covers of $(U_{n})_{n \in \omega}$ of $X$ we can find a sequence of finite sets $V_{n} \subset U_{n}$ and $\bigcup\{V_{n} \colon n \in \omega\}$ is a cover of $X$.
\end{definition}

\begin{definition}[Countable fan tightness space]\label{d:Countable fan tightness space}[{\cite{Kocinac2011}, Section 1.1}]
A space $X$ has countable fan tightness if $\textsf{S}_{fin}(\Omega_{x}, \Omega_{x})$ holds for each $x \in X$, where $\Omega_{x}= \{A \subset X \backslash \{x\} \colon x \in \bar{A}\}$.
\end{definition}

\begin{example}\label{eg:Countable fan tightness}[{\cite{Kocinac2011}, Section 1.1}]
The space $C_{p}(X, G)$ has countable fan tightness, where the $X = [0, 1]$ is set with usual topology and $G = S^{1}$ is unit circle in the complex plane with the usual topology.
\end{example}

\begin{definition}[$\omega$-cover]\label{d:omega cover}[{\cite{Kocinac2011}, Section 1.1}]
An open cover $\mathscr{U}$ of a space $X$ is said to be an $\omega$-cover if for every finite set $F$ in $X$ we can find a $U \in \mathscr{U}$ such that $F\subset U$ and $X \notin \mathscr{U}$ .
\end{definition}

\begin{definition}[Groupable $\omega$ cover]\label{d:Groupable cover}[{\cite{Kocinac2011}, Section 1.1}]
A $\omega$-cover $\mathscr{U}$ of a space $X$ is groupable if there is a partition $\{U_{n} \colon n \in \N\}$ of $\mathscr{U}$ into pairwise disjoint finite sets such that for each compact subset $K$ of $X$, for all but finitely many $n$, there is a $U \in U_n$ such that $K \subset U$. An element $A$ of $\Omega_{x}$ is groupable if there is a partition $\{A_{n} \colon n \in \N\}$ of $A$ into pairwise disjoint finite sets such that each neighborhood of $x$ has nonempty intersection with all but finitely many of the $A_{n}$. The set $\Omega_{gp}$ denotes the collection of all groupable $\omega$-covers of a space.
\end{definition}

\begin{definition}[Hurewicz covering property]\label{d:Hurewicz covering property}[{\cite{Kocinac2011}, Section 1.1}]
A space $X$ has the Hurewicz covering property if for each sequence $(U_{n})_{n \in \N}$ of open covers of $X$ there is a sequence $(V_{n})_{n \in \N}$ with $V_{n}$ is a finite subset of $U_{n}$ for each $n \in \N$, and each $x \in X$ belongs to $\bigcup V_{n}$ for all but finitely many $n$.
\end{definition}

\begin{definition}[Rothberger space]\label{d:Rothberger space}[\cite{Sakai2020}]
A space $X$ is said to be Rothberger if for any sequence $(\mathscr{U}_{n})_{n \in \omega}$ of open covers of $X$, there exist some $U_{n} \in \mathscr{U}_{n \in \omega}$ such that $\{U_{n} \colon n \in \omega\}$ is a cover of $X$.
\end{definition}

\begin{definition}[Reznichenko property]\label{d:Reznichenko property}[\cite{Kocinac2003}, Section 4]
A space $X$ has the Reznichenko property at $x \in X$ if for each $A \in \Omega_{x}$ there is a countably infinite pairwise disjoint collection $\F$ of finite subsets of $A$ such that for each neighborhood $U$ of $x$, for all but finitely many $F \in \F$, $U \cap F$ is nonempty. When $X$ has the Reznichenko property at each of its points, then we say that $X$ has the Reznichenko property.
\end{definition}
The following implication is true for any arbitrary space.
\begin{gather*}
\sigma-\text{compact space} \Rightarrow \text{Hurewicz space} \Rightarrow \text{Menger space} \Rightarrow \text{Lindelöf space} \\  \text{Rothberger space} \Rightarrow \text{Menger space}
\end{gather*}
\begin{definition}[Network weight of a space, \cite{Arhangelskii1992}]
For any space $X$, a family $\mathcal{N}$ of subsets of $X$ is a network in the space $X$ if for any $x \in X$ and for any open subset $U$ of $X$ with $x \in U$, there exists some $J \in \mathcal{N}$ such that $x \in J \subset U$.

In other words, any non-empty open subset of $X$ is the union of elements of the network $\mathcal{N}$. The network weight of $X$, denoted by $nw(X)$, is the minimum cardinality of a network in the space $X$, i.e., $nw(X)$ is the least cardinal number $\kappa$ such that if $\mathcal{N}$ is a network for the space $X$, then $\kappa \le \lvert \mathcal{N} \lvert$.
\end{definition}

\begin{definition}[Weight of a space, \cite{Arhangelskii1992}]
For any space $X$, the weight of $X$, denoted by $w(X)$, is the minimum cardinality of a base for the space $X$, i.e., $w(X)$ is the least cardinal number $\kappa$ such that if $\mathcal{B}$ is a base for the space $X$, then $\kappa \le \lvert \mathcal{B} \lvert$.
\end{definition}

Note that if $w(X)=\omega$, then $X$ is a space with a countable base (it is a separable metric space). Also if $nw(X)=\omega$, then $X$ is a space with a countable network. 
The cardinal invariant $iw(X)$ is called the $i$-weight of $X$, which is a minimal weight of all spaces onto which $X$ can be condensed.
\begin{definition}[$\tau$-monolithic space, \cite{Arhangelskii1992}]
Let $\tau$ be an infinite cardinal number. A space $X$ is said to be $\tau$-monolithic if for each subspace $Y$ of $X$ with $\lvert Y \lvert \le \tau, nw(\overline{Y}) \le \tau$.
\end{definition}

\begin{definition}[Monolithic space]
A space $X$ is said to be monolithic if it is $\tau$-monolithic for each infinite cardinal $\tau$.
\end{definition}

\begin{example}
The space $\mathbb{N}$ with cofinite topology is $\tau$-monolithic for each infinite cardinal $\tau$. Hence it is a monolithic space.
\end{example}

\begin{example}
$C_{p}([0,1])$ is a monoloithic space.
\end{example}

\begin{theorem}[\cite{Arhangelskii1992}]\label{t:Equivalent condition for monolithic space}
The following statements are equivalent:
\begin{enumerate}
\item $X$ is $\tau$-monolithic for each infinite cardinal number $\tau$.
\item For each subspace $Y$ of $X$, $d(Y)=nw(Y)$.
\end{enumerate}
\end{theorem}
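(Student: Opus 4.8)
The plan is to prove the two implications separately, relying on three standard facts about cardinal functions that hold for every space and every subspace. First, $d(Z) \le nw(Z)$ for any space $Z$: choosing one point from each nonempty member of a network of minimal cardinality produces a dense subset of cardinality at most $nw(Z)$. Second, network weight is monotone under passing to subspaces, that is, $nw(Y) \le nw(Z)$ whenever $Y \subseteq Z$; indeed, if $\mathcal{N}$ is a network for $Z$ then $\{N \cap Y \colon N \in \mathcal{N}\}$ is a network for $Y$. Third, if $Y$ is dense in a space $Z$ then $d(Z) \le |Y|$, so in particular $d(\overline{Y}) \le |Y|$. In view of the first fact, the only nontrivial content is the reverse inequality $nw(Y) \le d(Y)$, which is exactly what monolithicity supplies.

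For the implication $(2) \Rightarrow (1)$, I would fix an infinite cardinal $\tau$ and a subspace $Y \subseteq X$ with $|Y| \le \tau$, and show $nw(\overline{Y}) \le \tau$. Since $Y$ is dense in its closure $\overline{Y}$, the third fact gives $d(\overline{Y}) \le |Y| \le \tau$. Applying hypothesis $(2)$ to the subspace $\overline{Y}$ then yields $nw(\overline{Y}) = d(\overline{Y}) \le \tau$. As $\tau$ and $Y$ were arbitrary, $X$ is $\tau$-monolithic for every infinite cardinal $\tau$, which is statement $(1)$.

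For the implication $(1) \Rightarrow (2)$, I would fix an arbitrary subspace $Y \subseteq X$ and set $\tau = d(Y)$, choosing a dense subset $D$ of $Y$ with $|D| \le \tau$. Letting $\overline{D}$ denote the closure of $D$ computed in $X$, the $\tau$-monolithicity granted by $(1)$ together with $|D| \le \tau$ gives $nw(\overline{D}) \le \tau$. Because $D$ is dense in $Y$, we have $Y \subseteq \overline{D}$, so the subspace monotonicity of network weight yields $nw(Y) \le nw(\overline{D}) \le \tau = d(Y)$. Combining this with the always-valid inequality $d(Y) \le nw(Y)$ gives $d(Y) = nw(Y)$, as required.

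I expect no serious obstacle: the argument is essentially bookkeeping of cardinal inequalities. The one point demanding care is keeping track of where closures are taken — the closure of $D$ must be computed in $X$ so that $\tau$-monolithicity applies, and one must check that $Y$ is contained in this $X$-closure, which follows from $\overline{D}^{\,Y} = \overline{D}^{\,X} \cap Y$ together with the density of $D$ in $Y$. The subspace monotonicity of network weight should likewise be stated explicitly, since it is the mechanism transferring the bound on $nw(\overline{D})$ down to $nw(Y)$.
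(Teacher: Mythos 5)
Your argument is correct and complete: both implications are handled by the standard bookkeeping with $d(Z)\le nw(Z)$, monotonicity of $nw$, and the identification $Y\subseteq\overline{D}^{\,X}$ when $D$ is dense in $Y$, and you rightly flag that the closure must be taken in $X$ for $\tau$-monolithicity to apply. The paper itself states this result as a citation to Arhangelskii's book and gives no proof, so there is nothing to compare against; the only cosmetic point is that when you set $\tau=d(Y)$ you should invoke the usual convention that density is at least $\omega$ (or replace $\tau$ by $\omega+d(Y)$) so that $\tau$ is an infinite cardinal.
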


Notation:
\begin{definition}[Stable space, \cite{Arhangelskii1992}]
A space $X$ is called $\tau$-stable if for every continuous image $Y$ of $X$,  $iw(Y)<\tau$. A space $X$ is called stable if it is $\tau$-stable for every infinite cardinal $\tau$.
\end{definition}

\begin{example}
Every compact space is stable.
\end{example}

\begin{definition}[\cite{Shakhmatov2010}, Definition 2.2]
For a topological group $G$, we say that a space $X$ is

\begin{enumerate}
\item $G$-regular, provided that corresponding to every closed subset $F$ of $X$ and for each point $x\in X\backslash F$, we can find $f\in\cpxg$ and a non identity element $g\in G$ such that $f(x) =g$ and $f(x)\in \{e\}$ for all $x\in F$.

\item $G^{\ast}$-regular, provided that there exists a non identity element $g\in G$ such that corresponding to every closed subset $F$ of $X$ and for every $x \in X \backslash F$, we can find $f\in\cpxg$ such that $f(x) =g$ and $f(x)\in \{e\}$ for all $x\in F$.

\item $G^{\ast\ast}$-regular, provided that for every closed subset $F$ of $X$, for each $x\in X\backslash F$ and for every element $g$ in $G$, we can find $f\in\cpxg$ such that $f(x)=g$ and $f(x)\in \{e\}$ for every $x\in F$.
\end{enumerate}
\end{definition}

\section{Tightness property of the space $\cpxg$}\label{s:Tightness property of the space cpxg}
In 1986,  \cite{arkhangelskii1986} Arkhangelskii find the necessary and sufficient condition for countable fan tightness of the space $\cpx$ in term of Menger property of the space $X$. In continuation of this study we are going to generalize this  result for the space $\cpxg$ under the certain condition. In this section we are going to find necessary and sufficient conditions for the space $\cpxg$ to be countable fan tightness (countable strong fan tightness) in term of Menger (Rothberger) property of the base space $X$. We also include the necessary and sufficient condition for the space to have  Reznichenko property in terms of Hurewicz property. Following results motivates us to find such conditions.

\begin{theorem}[\cite{Kocinac2011}, Corollary 2.4]\label{t:NS condition for count fan tightness of cpxg}
Let $G$ be a metric group and $X$ be a $G^{\ast}$-regular space then $\cpxg$ has countable fan tightness if and only if $X$ satisfies $S_{fin}(\Omega,\Omega)$.
\end{theorem}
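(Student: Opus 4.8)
The plan is to prove both implications through the standard $\cp$-duality between $\omega$-covers of $X$ and subsets of $\cpxg$ that cluster at the constant identity function, adapting Arkhangelskii's real-valued argument to a metric group $G$. Since $\cpxg$ carries pointwise group operations it is a topological group, hence homogeneous, so it suffices to verify $\textsf{S}_{fin}(\Omega_{\mathbf e},\Omega_{\mathbf e})$ at the single point $\mathbf e$, the constant function with value the identity $e \in G$. The dictionary is supplied by the open sublevel sets: for $f \in \cpxg$ and $\varepsilon > 0$ put $O(f,\varepsilon) = \{x \in X : d(f(x),e) < \varepsilon\}$, where $d$ is a metric on $G$. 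A basic neighborhood of $\mathbf e$ is $W(F,\varepsilon) = \{f : F \subseteq O(f,\varepsilon)\}$ for finite $F \subseteq X$, so a set $A \subseteq \cpxg \setminus \{\mathbf e\}$ lies in $\Omega_{\mathbf e}$ exactly when, for every finite $F$ and every $\varepsilon$, some $f \in A$ has $F \subseteq O(f,\varepsilon)$; for each fixed $\varepsilon$ the family $\{O(f,\varepsilon) : f \in A\}$ is then an $\omega$-cover of $X$ (after discarding any $f$ with $O(f,\varepsilon)=X$). This translation is what I would exploit in both directions.

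For the implication $X \models \textsf{S}_{fin}(\Omega,\Omega) \Rightarrow \cpxg$ has countable fan tightness, the key device is a block partition that controls precision. Given a sequence $(A_m)_{m\in\N}$ with $\mathbf e \in \overline{A_m}$, I would first partition $\N$ into infinitely many infinite blocks $\{I_k\}_{k\in\N}$ and attach the fixed precision $1/k$ to $I_k$. For $m \in I_k$ the family $\mathscr{U}_m = \{O(f,1/k) : f \in A_m\}$ is an $\omega$-cover, so applying $\textsf{S}_{fin}(\Omega,\Omega)$ inside each block yields finite $\mathscr{V}_m \subseteq \mathscr{U}_m$ whose union over $m \in I_k$ is again an $\omega$-cover. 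Setting $B_m = \{f \in A_m : O(f,1/k) \in \mathscr{V}_m\}$ gives finite subsets, and to see $\mathbf e \in \overline{\bigcup_m B_m}$ I take any finite $F$ and $\varepsilon > 0$, choose $k$ with $1/k < \varepsilon$, and use the block-$I_k$ $\omega$-cover to find $f \in \bigcup_m B_m$ with $F \subseteq O(f,1/k) \subseteq O(f,\varepsilon)$. The point of the partition is precisely that an $\omega$-cover guarantees only one element containing $F$, with no control on its precision index; fixing the precision per block while letting $k$ range over $\N$ restores arbitrary closeness yet keeps each $B_m$ finite.

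For the converse I would turn a sequence $(\mathscr{U}_n)$ of $\omega$-covers into sets $A_n \in \Omega_{\mathbf e}$, apply countable fan tightness, and translate back. After reducing (by refinement in the Tychonoff space) to $\omega$-covers built from cozero-type sets, the essential step is, for $U \in \mathscr{U}_n$ and finite $D \subseteq U$, to manufacture $f_{U,D} \in \cpxg$ equal to $e$ on $D$ while staying bounded away from $e$, say $d(f_{U,D},e) \ge \delta_0$, off $U$; this is exactly where $G^{\ast}$-regularity and the metric of $G$ enter, the fixed non-identity witness $g_0$ (with $d(g_0,e)=\delta_0$) providing the separation of the finite set $D$ from the closed complement $X \setminus U$. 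Collecting these into $A_n$ makes $A_n$ cluster at $\mathbf e$, since each finite $F$ lies in some $U$ and then $f_{U,F} \in W(F,\varepsilon)$ for all $\varepsilon$; fan tightness then produces finite $B_n \subseteq A_n$ with $\mathbf e \in \overline{\bigcup_n B_n}$. Taking $\mathscr{V}_n = \{U : \text{some } f_{U,D} \in B_n\}$ and using $d(f_{U,D},e) \ge \delta_0$ off $U$, any finite $F$ witnessed by $f \in W(F,\delta_0)\cap \bigcup_n B_n$ must satisfy $F \subseteq U$, so $\bigcup_n \mathscr{V}_n$ is the desired $\omega$-cover.

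The hard part will be this function construction in the converse: unlike a clopen cover, an open $U$ admits no continuous $G$-valued map that is exactly $e$ on $U$ and bounded away from $e$ off $U$, so a naive one-function-per-cover-element encoding fails on the boundary, and, because $G$ need not be path-connected, one cannot simply post-compose a real Urysohn function with a path from $e$ to $g_0$. I expect to resolve this as in the classical argument, by indexing the members of $A_n$ by pairs $(U,D)$ with $D \in [U]^{<\omega}$ and demanding only that $f_{U,D} = e$ on the finite set $D$ while $d(f_{U,D},e)\ge\delta_0$ on the closed set $X\setminus U$. Verifying that $G^{\ast}$-regularity together with the metric genuinely yields such a separating function for a finite set against a closed complement is the crux, and is the point at which the passage from $\cpx$ to $\cpxg$ really uses the hypotheses of the theorem; the remaining bookkeeping — discarding members equal to $X$, matching indices, and checking the $\omega$-cover conditions — is routine.
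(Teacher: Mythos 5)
Your argument is correct and is essentially the standard Arkhangelskii-style duality proof that \cite{Kocinac2011} itself uses; note that the paper does not prove this statement at all but imports it verbatim as Corollary 2.4 of \cite{Kocinac2011}, so there is no in-paper proof to compare against. The "crux" you flag in the converse --- producing $f_{U,D}$ equal to $e$ on the finite set $D$ and taking a fixed non-identity value $g^{-1}$ on $X\setminus U$ --- is exactly Lemma \ref{l:property of G regular space} quoted later in the paper (Lemma 2.1 of \cite{Kocinac2011}), so you may simply invoke it; with $\delta_0=d(g^{-1},e)>0$ your $\omega$-cover extraction then goes through. One small repair in the forward direction: the set $B_m=\{f\in A_m : O(f,1/k)\in\mathscr{V}_m\}$ need not be finite (many $f$ can induce the same sublevel set), so you should instead choose one witness $f\in A_m$ for each member of the finite family $\mathscr{V}_m$; this does not affect the rest of the argument.
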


\begin{theorem}[\cite{Kocinac2011}, Theorem 2.5]\label{t:NS condition for count strong fan tightness of cpxg}
Let $G$ be a metric group and $X$ be a $G^{\ast}$-regular space then $\cpxg$ has  countable strong fan tightness if and only if $X$ satisfies $S_{1}(\Omega,\Omega)$
\end{theorem}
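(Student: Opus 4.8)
The plan is to establish both implications at the single point $\mathbf{e}\in\cpxg$ (the constant function with value the identity $e$ of $G$) and then pass to all points by homogeneity. Since the pointwise operations make $\cpxg$ a topological group, left translation by any fixed $f_{0}$ is a self-homeomorphism, so $\cpxg$ has countable strong fan tightness if and only if $S_{1}(\Omega_{\mathbf{e}},\Omega_{\mathbf{e}})$ holds, where $\Omega_{\mathbf{e}}=\{A\subset\cpxg\setminus\{\mathbf{e}\}:\mathbf{e}\in\overline{A}\}$ and $\Omega$ denotes the family of $\omega$-covers of $X$. Fix a compatible metric $\rho$ on $G$ and write $V_{m}=\{g\in G:\rho(g,e)<1/m\}$ for the resulting decreasing neighbourhood base at $e$. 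The bridge between the two sides is the elementary observation that $\mathbf{e}\in\overline{A}$ exactly when, for every finite $F\subset X$ and every $m$, some $f\in A$ satisfies $f(F)\subset V_{m}$, equivalently $F\subset f^{-1}(V_{m})$.

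For the implication $(\Leftarrow)$, assume $X$ satisfies $S_{1}(\Omega,\Omega)$ and let $(A_{k})_{k}$ be a sequence in $\Omega_{\mathbf{e}}$. I would relabel it as a matrix $(A_{i,j})_{i,j\in\N}$ and, for each pair, form $\mathscr{W}_{i,j}=\{f^{-1}(V_{i}):f\in A_{i,j}\}$, discarding any member equal to $X$. The observation above shows each $\mathscr{W}_{i,j}$ is an $\omega$-cover of $X$. Applying $S_{1}(\Omega,\Omega)$ separately along each row $i$ to the sequence $(\mathscr{W}_{i,j})_{j}$ yields, for every $i$, functions $f_{i,j}\in A_{i,j}$ such that $\{f_{i,j}^{-1}(V_{i}):j\in\N\}$ is an $\omega$-cover. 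The selected family contains exactly one function from each $A_{i,j}$, as required, and it has $\mathbf{e}$ in its closure: given finite $F$ and scale $m$, choose any row $i\ge m$; since $\{f_{i,j}^{-1}(V_{i})\}_{j}$ is an $\omega$-cover there is $j$ with $F\subset f_{i,j}^{-1}(V_{i})$, whence $f_{i,j}(F)\subset V_{i}\subset V_{m}$. Note this direction uses only that $G$ is (first countable) metric, not $G^{\ast}$-regularity; the degenerate case where some $f^{-1}(V_{i})=X$ is dispatched by selecting such an $f$ outright for that set.

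For the implication $(\Rightarrow)$, assume $\cpxg$ has countable strong fan tightness and let $(\mathscr{U}_{n})_{n}$ be $\omega$-covers of $X$. Here I would encode each cover into a subset of $\Omega_{\mathbf{e}}$ by manufacturing, for every finite $F\subset X$ and every $U\in\mathscr{U}_{n}$ with $F\subset U$, a function $h_{F,U}\in\cpxg$ that equals $e$ on $F$ and is constantly a fixed non-identity element on $X\setminus U$. Writing $g$ for the witnessing element of $G^{\ast}$-regularity and $Z=X\setminus U$, I build this in two moves: for each $x\in F$ apply $G^{\ast}$-regularity to the point $x$ and the closed set $Z\cup(F\setminus\{x\})$ to obtain $W_{x}$ with $W_{x}(x)=g$ and $W_{x}\equiv e$ on that closed set; the finite product $h_{0}=\prod_{x\in F}W_{x}$ then satisfies $h_{0}\equiv g$ on $F$ and $h_{0}\equiv e$ on $Z$, and the left translate $h_{F,U}=g^{-1}h_{0}$ is $e$ on $F$ and constantly $g^{-1}\ne e$ on $Z$. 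Setting $A_{n}=\{h_{F,U}:F\subset X\text{ finite},\ U\in\mathscr{U}_{n},\ F\subset U\}$, the $\omega$-cover property of $\mathscr{U}_{n}$ gives $\mathbf{e}\in\overline{A_{n}}$.

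Strong fan tightness at $\mathbf{e}$ now selects $h_{n}=h_{F_{n},U_{n}}\in A_{n}$ with $\mathbf{e}\in\overline{\{h_{n}:n\}}$, and I claim $\{U_{n}:n\}$ is the desired $\omega$-cover. Indeed, fix finite $F\subset X$ and a scale $m$ with $1/m<\rho(g^{-1},e)$; closure gives $n$ with $h_{n}(F)\subset V_{m}$, so no point of $F$ lies in $X\setminus U_{n}$ (where $h_{n}$ takes the value $g^{-1}$ at distance at least $1/m$ from $e$), i.e.\ $F\subset U_{n}$, and each $U_{n}\ne X$ since $X\notin\mathscr{U}_{n}$. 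The main obstacle is precisely the construction of $h_{F,U}$: $G^{\ast}$-regularity natively produces functions that are $e$ on a closed set and take the value $g$ only at a single point, whereas the encoding needs the reverse profile, namely $e$ on a finite set and bounded away from $e$ on a whole closed set. Resolving this—by using the product over $F$ with the enlarged closed sets $Z\cup(F\setminus\{x\})$ to force the value $g$ simultaneously on all of $F$, and then translating by $g^{-1}$ to interchange the roles of $e$ and $g$—is the technical heart of the argument and the only place where $G^{\ast}$-regularity, together with the group operation, is essential.
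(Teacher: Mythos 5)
Your proof is correct. Note that the paper does not prove this statement at all---it is imported by citation as \cite{Kocinac2011}, Theorem 2.5---so there is no in-paper argument to compare against; your proof is essentially the standard one from that source: reduction to the identity $\mathbf{e}$ by homogeneity of the topological group $\cpxg$, the dictionary between $\Omega_{\mathbf{e}}$ and $\omega$-covers via the sets $f^{-1}(V_m)$, and, in the forward direction, the construction of the functions $h_{F,U}$ (equal to $e$ on $F$ and constantly $g^{-1}$ off $U$), which is precisely the content of Lemma~\ref{l:property of G regular space} and which you have correctly re-derived from $G^{\ast}$-regularity by taking the pointwise product of the witnesses for $x$ against $\,(X\setminus U)\cup(F\setminus\{x\})$ and translating by $g^{-1}$.
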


\begin{definition}[\cite{Kocinac2011}]
A space $X$ is said to be $\omega$-Lindelof if each $\omega$-cover of $X$ contains a countable
$\omega$-subcover. Equivalently, all finite powers of $X$ are Lindelof.
\end{definition}

\begin{theorem}[\cite{Kocinac2003Scheepers}]\label{t:NS condition Hurewicz property}
Let $X$ be a $\omega$-Lindelof space, then  $X$ satisfies $S_{fin}(\Omega,\Omega^{gp})$ if and only if $X^{n}$ has Hurewicz property for every natural number $n$.
\end{theorem}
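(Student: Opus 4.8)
The plan is to prove both implications through a single bridge that converts covering statements about $X$ phrased with $\omega$-covers into covering statements about the finite powers $X^{n}$ phrased with ordinary open covers, and then to track the groupability condition across this bridge. The elementary fact underlying the bridge is that a family $\mathscr{U}$ of open subsets of $X$ is an $\omega$-cover of $X$ if and only if, for every $n \in \N$, the family $\mathscr{U}^{n} = \{U^{n} \colon U \in \mathscr{U}\}$ is an open cover of $X^{n}$: a finite set $\{x_{1},\dots,x_{m}\} \subseteq X$ lies in some $U \in \mathscr{U}$ exactly when the point $(x_{1},\dots,x_{m}) \in X^{m}$ lies in $U^{m}$. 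First I would also record the reverse passage, which turns an arbitrary open cover $\mathscr{W}$ of $X^{n}$ into an $\omega$-cover of $X$: for a finite $F \subseteq X$ the finite set $F^{n}$ is covered by finitely many members of $\mathscr{W}$, and a tube-lemma argument for finite sets produces an open $U \supseteq F$ with $U^{n}$ contained in that finite union. Since $X$ is $\omega$-Lindelöf, every finite power $X^{n}$ is Lindelöf, so throughout I may assume all covers in sight are countable; this is what makes the selection principle $S_{fin}(\Omega,\Omega^{gp})$ and the countable partitions in the definition of a groupable $\omega$-cover applicable.

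For the implication $S_{fin}(\Omega,\Omega^{gp}) \Rightarrow (X^{n}$ Hurewicz for all $n)$, I would fix $n$ and a sequence $(\mathscr{W}_{k})_{k}$ of open covers of $X^{n}$. Using the reverse passage I attach to each $\mathscr{W}_{k}$ an $\omega$-cover $\mathscr{U}_{k}$ of $X$ consisting of the open $U$ for which $U^{n}$ lies inside a finite union of $\mathscr{W}_{k}$-members, apply $S_{fin}(\Omega,\Omega^{gp})$ to $(\mathscr{U}_{k})_{k}$ to obtain finite $\mathscr{V}_{k} \subseteq \mathscr{U}_{k}$ whose union is a groupable $\omega$-cover, and then replace each chosen $U$ by the corresponding finite subfamily of $\mathscr{W}_{k}$ it dominates. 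The groupability of $\bigcup_{k}\mathscr{V}_{k}$, tested against the finite (hence compact) sets $\{x_{1},\dots,x_{n}\}$, is exactly what forces each point $(x_{1},\dots,x_{n}) \in X^{n}$ to lie in all but finitely many of the resulting unions, which is the Hurewicz conclusion in the form of Definition \ref{d:Hurewicz covering property}.

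Conversely, to derive $S_{fin}(\Omega,\Omega^{gp})$ from the Hurewicz property of all finite powers, I would start from a sequence $(\mathscr{U}_{k})_{k}$ of countable $\omega$-covers of $X$ and, for each fixed $n$, apply the Hurewicz property of $X^{n}$ to the covers $\mathscr{U}_{k}^{n}$ of $X^{n}$, which are genuine open covers by the bridge above. Pulling the finite selections back along $U \mapsto U^{n}$ produces, for each $n$, finite subfamilies of the $\mathscr{U}_{k}$, and I would then merge these countably many outputs into a single family $\bigcup_{k}\mathscr{V}_{k}$ by a diagonal interval partition in which the $m$-th block simultaneously carries the contributions of the powers $1,\dots,m$. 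Because a $\gamma$-cover of $X^{n}$ eventually swallows every compact $K \subseteq X$ through $K^{n}$, this diagonal arrangement makes each compact set contained in a block-member for all but finitely many blocks, which is precisely the partition witnessing that $\bigcup_{k}\mathscr{V}_{k}$ is a groupable $\omega$-cover.

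The main obstacle in both directions is bookkeeping rather than covering theory: the definition of a groupable $\omega$-cover only guarantees the existence of some finite partition of the output with the eventual-containment property, whereas the Hurewicz property demands one finite selection per given cover, indexed by $k$. Reconciling these requires the interval partition argument sketched above, grouping consecutive indices so that the abstract groupable partition aligns with the concrete enumeration of the input covers, together with a diagonalization over the pairs $(n,k)$ so that all finite powers are handled at once. A secondary technical point is the passage between the ``point in cofinitely many members'' formulation of a Hurewicz $\gamma$-cover and the ``compact set contained in a member for cofinitely many blocks'' formulation required for $\Omega^{gp}$; this I would settle by replacing the selected unions with an increasing modification, so that a compact set absorbed pointwise is absorbed set-wise, and it is here that the full strength of the Hurewicz property, rather than mere Menger, is used.
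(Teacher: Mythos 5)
The paper does not actually prove this statement: Theorem \ref{t:NS condition Hurewicz property} is imported verbatim from the cited Ko\v{c}inac--Scheepers paper, so there is no in-paper proof to compare against. Your outline follows the standard route for this equivalence (the $\omega$-cover/finite-power bridge $U \mapsto U^{n}$, the finite-tube lemma turning an open cover of $X^{n}$ into an $\omega$-cover of $X$, and a diagonalization over the powers), and the forward direction is essentially right. One caveat there: naming the re-indexing obstacle is not the same as resolving it. The fix is to assume from the outset that each $\mathscr{W}_{k+1}$ refines $\mathscr{W}_{k}$ (pass to common refinements), which makes the derived $\omega$-covers $\mathscr{U}_{k}$ decreasing and lets you push a selection attached to the $m$-th groupable block down to every index $k$ in the corresponding interval of indices. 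Without that reduction the blocks of the groupable partition can concentrate on a sparse set of indices $k$, and the Hurewicz conclusion, which demands a finite selection doing work at every $k$, does not follow.

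The converse contains a step that fails as written. You argue that the Hurewicz selections on $X^{n}$ ``eventually swallow every compact $K \subseteq X$ through $K^{n}$,'' which you need because the paper's definition of a groupable $\omega$-cover is phrased with compact sets. But the Hurewicz property only gives a pointwise threshold: each point of $K^{n}$ lies in $\bigcup\mathscr{G}_{k}$ for all $k$ past a bound depending on that point, and since the sets $\bigcap_{k \ge m}\bigcup\mathscr{G}_{k}$ need not be open, compactness of $K^{n}$ yields no uniform bound, let alone containment of $K$ in a single selected member. In fact, with the compact-set formulation the theorem is false outright --- no member of an $\omega$-cover can contain $K = X$ when $X$ is compact, yet $[0,1]$ is Hurewicz in all finite powers --- so the paper's definition must be read with ``finite subset $F$'' in place of ``compact subset $K$,'' as in the source. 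With that corrected reading your converse only needs to swallow finite sets, which the pointwise Hurewicz conclusion does deliver via the single point $(x_{1},\dots,x_{n})$, and the diagonal merging you describe then completes the standard argument.
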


\begin{theorem}\label{t:NS condition for countable fan tightness of cpxg in term of Menger property}
Let $X$ is a metrizable $\omega$-lindeloff space and $G$ be a metric group. Let $X$ is $G^{\ast}$-regular then $\cpxg$ has countable fan tightness if and only if $X^{n}$ has Menger property for all $n \in \N$.
\end{theorem}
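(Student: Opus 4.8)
The plan is to reduce the statement to a purely combinatorial equivalence about selection principles on $X$ and then to establish that equivalence directly. By Theorem \ref{t:NS condition for count fan tightness of cpxg} (which applies since $G$ is a metric group and $X$ is $G^{\ast}$-regular), the space $\cpxg$ has countable fan tightness if and only if $X$ satisfies $\textsf{S}_{fin}(\Omega,\Omega)$, where $\Omega$ denotes the family of $\omega$-covers of $X$. Thus it suffices to prove that, for a metrizable $\omega$-Lindel\"of space $X$, $X$ satisfies $\textsf{S}_{fin}(\Omega,\Omega)$ if and only if $X^{n}$ has the Menger property for every $n\in\N$. This is the Menger analogue of Theorem \ref{t:NS condition Hurewicz property}, and the two hypotheses on $X$ are used exactly as there: metrizability together with $\omega$-Lindel\"ofness forces every finite power $X^{n}$ to be Lindel\"of and second countable, so that every open cover of $X^{n}$ has a countable subcover and may be refined by a cover consisting of basic open boxes.

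For the direction assuming each $X^{n}$ is Menger, I would argue as follows. Fix a sequence $(\mathscr U_{m})_{m\in\N}$ of $\omega$-covers of $X$ and a partition $\N=\bigsqcup_{k\in\N}N_{k}$ into infinitely many infinite pieces. The key observation is that if $\mathscr U$ is an $\omega$-cover of $X$ then $\{U^{k}\colon U\in\mathscr U\}$ is an open cover of $X^{k}$, since any point $(x_{1},\dots,x_{k})$ has its finite coordinate set contained in some $U\in\mathscr U$. For each fixed $k$, applying the Menger property of $X^{k}$ to the covers $\{U^{k}\colon U\in\mathscr U_{m}\}$ with $m\in N_{k}$ produces finite sets $\mathscr V_{m}\subseteq\mathscr U_{m}$ whose $k$-th powers cover $X^{k}$. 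A finite set $F\subseteq X$ with $|F|=k$ corresponds to a point of $X^{k}$, which lies in some $U^{k}$ with $U\in\bigcup_{m\in N_{k}}\mathscr V_{m}$, whence $F\subseteq U$; letting $k$ range over all finite cardinalities shows that $\bigcup_{m}\mathscr V_{m}$ is an $\omega$-cover of $X$, so $X$ satisfies $\textsf{S}_{fin}(\Omega,\Omega)$.

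For the converse I would first record a soft reduction: if a space $Y$ satisfies $\textsf{S}_{fin}(\Omega_{Y},\Omega_{Y})$ then $Y$ is Menger, because given open covers $(\mathscr O_{m})$ of $Y$ one may pass to the covers $\mathscr O_{m}^{\mathrm{fin}}$ of all finite unions of members of $\mathscr O_{m}$, which are $\omega$-covers (each finite subset of $Y$ lies in a single finite union) unless some $\mathscr O_{m}$ already admits a finite subcover; a selection of finite subfamilies producing an $\omega$-cover then unpacks into a Menger selection from the original covers. In view of this, it remains to show that $\textsf{S}_{fin}(\Omega,\Omega)$ is inherited by every finite power $X^{n}$. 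Using second countability I would reduce to $\omega$-covers of $X^{n}$ consisting of basic boxes closed under finite unions, build $\omega$-covers of $X$ from the coordinate projections of these boxes, feed them into $\textsf{S}_{fin}(\Omega,\Omega)$ for $X$, and pull the selection back to $X^{n}$.

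The main obstacle is precisely this last inheritance step. A box $\omega$-cover of $X^{n}$ carries $n$ essentially independent coordinate directions, and a single $\omega$-cover of $X$ cannot control all of them at once: selecting finite subfamilies that handle the first coordinate leaves the remaining coordinates uncontrolled, and a naive diagonal encoding sees only the diagonal of $X^{n}$ and loses every off-diagonal finite configuration. I expect to resolve this by an iterated, coordinate-wise selection, performing the $\textsf{S}_{fin}(\Omega,\Omega)$ selection one coordinate at a time and exploiting that only finitely many pieces are produced at each stage, so that the finitely many residual second-, third-, \dots, coordinate sub-selections can be amalgamated, together with a re-indexing of $\N$ into finitely many blocks per step. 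This is the technical heart of the argument and mirrors the corresponding passage in the proof of Theorem \ref{t:NS condition Hurewicz property}; once it is in place, the equivalence, and hence the theorem, follows.
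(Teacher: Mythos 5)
Your reduction via Theorem \ref{t:NS condition for count fan tightness of cpxg} to the combinatorial equivalence ``$X$ satisfies $\textsf{S}_{fin}(\Omega,\Omega)$ if and only if every $X^{n}$ is Menger'' is exactly the paper's strategy, and your proof of the implication from ``every $X^{n}$ is Menger'' to $\textsf{S}_{fin}(\Omega,\Omega)$ (partition $\N$ into infinitely many infinite pieces, cover $X^{k}$ by $k$-th powers of cover elements, read a finite $F\subseteq X$ as a point of $X^{|F|}$) is complete and correct; the paper simply cites this equivalence as Theorem~14 of the Ko\v{c}inac--Scheepers reference instead of proving it. The genuine gap is in the other implication. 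You correctly isolate the hard step --- transferring $\textsf{S}_{fin}(\Omega,\Omega)$ from $X$ to the finite powers $X^{n}$ --- but you do not prove it: you only describe an ``iterated, coordinate-wise selection'' that you ``expect'' to work. As sketched, that plan does not obviously close: after selecting finitely many subfamilies to control the first coordinate, nothing guarantees that a given finite subset of $X^{n}$, whose coordinates interact across all $n$ directions simultaneously, ends up inside a single selected set, and no mechanism for amalgamating the residual selections is actually given.

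The standard way to close this gap (due to Just, Miller, Scheepers and Szeptycki) avoids coordinate-wise iteration entirely. Given an $\omega$-cover $\mathscr{U}$ of $X^{n}$, the family $\mathscr{V}=\{V\subseteq X \ \text{open} : V^{n}\subseteq U \ \text{for some}\ U\in\mathscr{U}\}$ is an $\omega$-cover of $X$: for finite $F\subseteq X$ the finite set $F^{n}$ lies in a single $U\in\mathscr{U}$, and shrinking a finite family of basic open boxes around the points of $F^{n}$ and intersecting over coordinates yields one open $V\supseteq F$ with $V^{n}\subseteq U$. Applying $\textsf{S}_{fin}(\Omega,\Omega)$ for $X$ to these derived covers and replacing each selected $V$ by a witness $U_{V}\in\mathscr{U}$ shows that $X^{n}$ satisfies $\textsf{S}_{fin}(\Omega,\Omega)$ and hence is Menger. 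It is worth noting that the paper's own proof of this direction detours through Theorem \ref{t:NS condition Hurewicz property}, i.e. through $\textsf{S}_{fin}(\Omega,\Omega^{gp})$ and the Hurewicz property, which does not follow from $\textsf{S}_{fin}(\Omega,\Omega)$ alone; your instinct to argue the Menger equivalence directly is the sounder route, but the argument must be completed as above rather than left as an expectation.
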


\begin{proof}
Let $\cpxg$ has countable fan tightness, then from Theorem \eqref{t:NS condition for count fan tightness of cpxg}, $X$ satisfies $S_{fin}(\Omega,\Omega)$. So from Theorem \eqref{t:NS condition Hurewicz property}, $X^n$ has Hurewicz property for every $n\in\mathbb{N}$. Since Hurewicz property implies Menger property for every topological space \cite[Defintion 1.2]{Sakai2020}, $X^{n}$ has Menger property for every natural number $n$.

Conversely assume that $X^{n}$ has Menger property for every natural number $n$. Then from \cite[Theorem 14]{Kocinac2003Scheepers}, $X$ satisfies $S_{fin}(\Omega,\Omega)$. Therefore from Theorem \eqref{t:NS condition for count fan tightness of cpxg}, $\cpxg$ has countable fan tightness.
\end{proof}

\begin{definition}[\cite{Gerlits1982}]
Let $\phi = <\mathscr{G}_{n},n\in \omega>$ be a sequence of open covers of the space $X$. A set $A \subseteq X$ is said to be $\phi$-small if for any $n\in \omega$ there are $k \in \omega$ and sets $G_{i} \in \mathscr{G}_{n+i}$ such that $A \subset \cap \{G_{i}: i < k\}$.
\end{definition}

Let us see the  property that was introduced by Gerlits and Nagy in \cite[Theorem 5]{Gerlits1982}:
\begin{multline}\label{property:1}
\text{If} \, \phi=<\mathscr{G}_{n}, n\in \omega> \, \text{is a sequence of open covers of} \,  X, \text{then} \, X \, \text{is the union of} \\ \text{countably many}\, \phi-\text{small sets}.\tag{*}
\end{multline}

\begin{theorem}[\cite{Kocinac2003Scheepers}, Theorem 19]\label{t:Gerlits Nagy condition}
For an $\omega$-Lindelof space $X$, $X$ has property $S_1(\Omega, \Omega^{gp})$ if and only if each finite power of $X$ has property \eqref{property:1}.
\end{theorem}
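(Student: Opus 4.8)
The plan is to prove both implications through a single translation dictionary between open covers of the finite powers $X^{n}$ and $\omega$-covers of $X$, together with the standard observation that the Gerlits--Nagy property~\eqref{property:1}, phrased as a decomposition into $\phi$-small sets, is the $\sigma$-face of a single-selection groupable covering property on one space. The dictionary rests on two facts. If $\mathscr{U}$ is an $\omega$-cover of $X$, then $\{\,U\times\cdots\times U : U\in\mathscr{U}\,\}$ is an $\omega$-cover of $X^{n}$: any finite $F\subseteq X^{n}$ has finite coordinate projections whose union lies in a single $U\in\mathscr{U}$, forcing $F\subseteq U\times\cdots\times U$. Conversely, since $X$ is $\omega$-Lindelof, every open cover of $X^{n}$ may be taken countable, refined by open boxes, and, by restricting to the diagonal copy of $X$ and passing to finite intersections, turned into an $\omega$-cover of $X$. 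The essential feature is that a single member $U\times\cdots\times U$ covering a point $(x_{1},\dots,x_{n})$ records exactly the inclusion $\{x_{1},\dots,x_{n}\}\subseteq U$, so single selections capturing every $n$-element subset of $X$ carry the same information as single selections covering $X^{n}$. This is the groupable, $S_{1}$-flavoured refinement of the dictionary already underlying the Menger and Hurewicz results recalled in Theorem~\ref{t:NS condition Hurewicz property}.

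For the forward implication I would assume $X\models S_{1}(\Omega,\Omega^{gp})$, fix $m$, and let $\phi=\langle\mathscr{G}_{n}\rangle$ be a sequence of open covers of $X^{m}$. After making each $\mathscr{G}_{n}$ countable and pushing it through the dictionary to an $\omega$-cover $\mathscr{U}_{n}$ of $X$, one application of $S_{1}(\Omega,\Omega^{gp})$ yields single selections $U_{n}\in\mathscr{U}_{n}$ whose union is a groupable $\omega$-cover of $X$ with finite blocks $B_{1},B_{2},\dots$. Decoding back to $X^{m}$, the block partition supplies, for each tail index $n$, the finite window $i<k$ and the sets $G_{i}\in\mathscr{G}_{n+i}$ demanded in the definition of a $\phi$-small set; sorting the points of $X^{m}$ by the block that eventually captures them then exhibits $X^{m}$ as a countable union of $\phi$-small sets, which is property~\eqref{property:1}.

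For the converse I would assume every $X^{n}$ has property~\eqref{property:1}, and, given $\omega$-covers $(\mathscr{U}_{n})_{n}$ of $X$, translate them power by power into sequences of open covers of each $X^{m}$ and apply property~\eqref{property:1} there to decompose $X^{m}$ into countably many $\phi$-small sets. The $\phi$-smallness of each piece provides, for every $m$-element subset of $X$, a window of indices and selected members whose intersection contains it; read back through the dictionary these become single elements $U_{n}\in\mathscr{U}_{n}$ grouped into finite blocks that witness $\{U_{n}:n\in\N\}$ as a groupable $\omega$-cover. Amalgamating the choices made for all $m$ into one sequence of selections produces the required groupable $\omega$-cover, and hence $S_{1}(\Omega,\Omega^{gp})$.

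The hard part will be the bookkeeping linking the block structure of a groupable $\omega$-cover to the sliding-window clause ``for each $n$ there are $k$ and $G_{i}\in\mathscr{G}_{n+i}$'' of $\phi$-smallness, and doing so \emph{uniformly over all powers $m$ at once}: a single application of the selection principle on $X$ must decode to the finite-power data for every $m$ simultaneously, while in the converse the decompositions of all the $X^{m}$ must be fused into one groupable cover of $X$. The natural device is to split $\N$ into infinitely many infinite pieces and devote the $m$-th piece to the $m$-th power, so that the grouping witnessed on $X$ and the groupings demanded on the $X^{m}$ can be matched; the $\omega$-Lindelof hypothesis is precisely what keeps every cover countable and makes this interleaving legitimate.
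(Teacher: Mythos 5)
The paper does not prove this statement at all: it is imported verbatim as Theorem~19 of the cited Kocinac--Scheepers paper, so there is no internal argument to compare yours against. Judged on its own terms, your proposal is a roadmap rather than a proof, and the two places where it defers the work are precisely where the real content of the theorem lives. First, you take as a ``standard observation'' that property~\eqref{property:1} is the $\sigma$-face of a single-selection groupable covering property on one space. That equivalence (for a single space, before finite powers enter at all) is itself a substantive theorem of the same Kocinac--Scheepers paper, obtained by characterizing \eqref{property:1} as the conjunction of the Rothberger and Hurewicz properties; nothing in your sketch supplies it, and without it neither implication gets off the ground.

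Second, your translation dictionary has a finite-to-one blowup that interacts badly with both the $S_1$ clause and the intersection clause of $\phi$-smallness. An arbitrary open cover $\mathscr{G}$ of $X^m$ does not refine to sets of the form $U\times\cdots\times U$ lying inside a \emph{single} member of $\mathscr{G}$; the standard construction only produces, for each finite $F\subseteq X$, an open $U\supseteq F$ with $U^m$ contained in a \emph{finite union} of members of $\mathscr{G}$ (one for each point of $F^m$). A single selection $U_n$ from the induced $\omega$-cover of $X$ therefore decodes to finitely many members of $\mathscr{G}_n$, whereas $\phi$-smallness demands one set $G_i$ from each of $\mathscr{G}_n,\dots,\mathscr{G}_{n+k-1}$ and containment of the piece in their \emph{intersection}, for every starting index $n$. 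Bridging this --- splitting the countably many pieces further, arranging the groupable blocks as intervals, and sliding the window across blocks, uniformly in $m$ --- is exactly what you label ``the hard part'' and do not carry out; the same applies to the amalgamation over all powers in the converse. As written, the proposal identifies the right dictionary but leaves the theorem unproved.
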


Using the above theorem we can generalize the result of \cite[Theorem 1]{Sakai1988}, in $C_{p}$-theory that $\cpx$ has countable strong fan tightness if and only if every finite power of $X$ is Rothberger (equivalent to property $C{''}$ in \cite{Sakai1988}).

\begin{theorem}
Suppose $X$ is an $\omega$-Lindelof space and $G$ be a metric group. Assume $X$ is $G^{\ast}$-regular then $\cpxg$ has countable strong fan tightness if and only if $X^{n}$ has Rothberger property for every natural number $n$.
\end{theorem}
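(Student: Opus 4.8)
The plan is to mirror, almost line for line, the proof of Theorem \ref{t:NS condition for countable fan tightness of cpxg in term of Menger property}, replacing the finite-selection hypothesis $S_{fin}$ by the single-selection hypothesis $S_{1}$ and the Menger property by the Rothberger property throughout. The decisive bridge is Theorem \ref{t:NS condition for count strong fan tightness of cpxg}: under the standing assumptions that $G$ is a metric group and $X$ is $G^{\ast}$-regular, it identifies countable strong fan tightness of $\cpxg$ with the property that $X$ satisfies $S_{1}(\Omega,\Omega)$. Consequently the entire statement collapses onto the purely combinatorial equivalence
\[
X\ \text{satisfies}\ S_{1}(\Omega,\Omega)\quad\Longleftrightarrow\quad X^{n}\ \text{is Rothberger for every}\ n\in\N,
\]
which is the exact Rothberger analogue of the Menger equivalence ``$X$ satisfies $S_{fin}(\Omega,\Omega)$ iff every $X^{n}$ is Menger'' invoked (via \cite[Theorem 14]{Kocinac2003Scheepers}) in the previous proof. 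The hypothesis that $X$ is $\omega$-Lindel\"of guarantees that every finite power $X^{n}$ is Lindel\"of, so that the $\omega$-cover machinery needed to relate covers of $X$ and of $X^{n}$ is available; this replaces the metrizability assumption used in the Menger theorem.

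For the forward direction I would begin with countable strong fan tightness of $\cpxg$, apply Theorem \ref{t:NS condition for count strong fan tightness of cpxg} to obtain that $X$ satisfies $S_{1}(\Omega,\Omega)$, and then deduce that each $X^{n}$ is Rothberger. The key observation is that if $\mathscr{U}$ is an $\omega$-cover of $X$ then $\{U^{n}\colon U\in\mathscr{U}\}$ is an (indeed $\omega$-) cover of $X^{n}$, since any point of $X^{n}$ has a finite coordinate set contained in some $U\in\mathscr{U}$. Running the $S_{1}(\Omega,\Omega)$-selection against a sequence of $\omega$-covers manufactured from the given open covers of $X^{n}$ yields a single-set-per-stage selection that covers $X^{n}$, which is precisely the Rothberger conclusion for $X^{n}$.

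For the converse I would assume each finite power $X^{n}$ is Rothberger and recover $S_{1}(\Omega,\Omega)$ for $X$. The standard device is to split $\N$ into countably many pairwise disjoint infinite blocks $\{I_{n}\colon n\in\N\}$, assign the block $I_{n}$ to the $n$-element subsets of $X$, and for each $n$ apply the Rothberger property of $X^{n}$ to the open covers $\{U^{n}\colon U\in\mathscr{U}_{m}\}$ indexed by $m\in I_{n}$. The sets $U_{m}$ selected across all blocks then form an $\omega$-cover of $X$: any finite $F\subseteq X$ has cardinality at most some $n$ and is therefore contained in one of the $U_{m}$ chosen from $I_{n}$. Feeding this witness back through Theorem \ref{t:NS condition for count strong fan tightness of cpxg} returns countable strong fan tightness of $\cpxg$; this also recovers, and generalizes to the group-valued setting, the real-valued result \cite[Theorem 1]{Sakai1988}.

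The main obstacle is the combinatorial equivalence itself, and specifically the forward passage from open covers of $X^{n}$ to $\omega$-covers of $X$ under the \emph{single}-selection constraint of $S_{1}$ rather than the finite-selection freedom of $S_{fin}$: one must check that the diagonal/coordinatewise reduction preserves the ``one set at a time'' requirement, so that no amalgamation of finitely many chosen sets is smuggled in. Once this lemma is secured --- either cited from the selection-principles literature in the spirit of \cite{Kocinac2003Scheepers} and Theorem \ref{t:Gerlits Nagy condition}, or proved directly via the block-splitting argument above --- the theorem follows by the same two-step chain of implications as Theorem \ref{t:NS condition for countable fan tightness of cpxg in term of Menger property}.
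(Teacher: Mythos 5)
Your proposal is correct in its overall architecture, and it reaches the conclusion by a genuinely different route from the paper. Both arguments share the first step: Theorem \ref{t:NS condition for count strong fan tightness of cpxg} converts countable strong fan tightness of $\cpxg$ into the statement that $X$ satisfies $S_{1}(\Omega,\Omega)$. From there the paper detours through the Gerlits--Nagy property \eqref{property:1} via Theorem \ref{t:Gerlits Nagy condition} --- which is a statement about $S_{1}(\Omega,\Omega^{gp})$, not $S_{1}(\Omega,\Omega)$ --- and then asserts that in an $\omega$-Lindel\"of space property \eqref{property:1} and the Rothberger property imply one another; this is the fragile part of the paper's argument, since \eqref{property:1} is in general strictly stronger than Rothberger. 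You instead go straight to the classical Sakai equivalence ``$X$ satisfies $S_{1}(\Omega,\Omega)$ if and only if every finite power of $X$ is Rothberger'' (\cite{Sakai1988}), which is exactly the combinatorial fact needed and avoids the $\Omega^{gp}$ machinery entirely; your block-splitting argument for the converse is the standard proof of that equivalence, and the $\omega$-Lindel\"of hypothesis is not even essential on this route. One caution about your forward direction: an arbitrary open cover of $X^{n}$ does not directly produce an $\omega$-cover of $X$ whose members $U$ satisfy $U^{n}\subseteq G$ for a \emph{single} member $G$ of the given cover (a finite set $F\subseteq X$ gives a finite $F^{n}\subseteq X^{n}$ that need not lie in one element of an arbitrary open cover), so the reduction as sketched would force finitely many selections per stage and collapse to Menger. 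The standard repair, still within your framework, is to show that $S_{1}(\Omega,\Omega)$ for $X$ transfers to $S_{1}(\Omega,\Omega)$ for $X^{n}$ (genuine $\omega$-covers of $X^{n}$ do pull back to $\omega$-covers of $X$ one set at a time) and then invoke the known implication from $S_{1}(\Omega,\Omega)$ to the Rothberger property. You explicitly flag this as the main obstacle and propose to cite the lemma from the selection-principles literature, which is legitimate; with that citation in place your proof is complete and, if anything, on firmer ground than the paper's.
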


\begin{proof}
Let that $\cpxg$ has countable strong fan tightness, then from Theorem \eqref{t:NS condition for count strong fan tightness of cpxg}, $X$ satisfies $S_{1}(\Omega, \Omega)$. Then from Theorem \eqref{t:Gerlits Nagy condition}, since $X$ is a $\omega$-Lindelof space $X$ satisfies property \eqref{property:1}. In a $\omega$-lindelof space property \eqref{property:1} implies Rothberger property. Therefore $X^{n}$ satisfies Rothberger property for every natural number $n$.

Conversely assume that $X^n$ satisfies Rothberger property for every natural number $n$, In a $\omega$-Lindelof space Rothberger property implies property \eqref{property:1}, so from Theorem \eqref{t:Gerlits Nagy condition}, $X$ satisfies $S_{1}(\Omega, \Omega)$. Then, from Theorem \eqref{t:NS condition for count strong fan tightness of cpxg}, $\cpxg$ has countable strong fan tightness.
\end{proof}

Now we will generalize another result on $C_{p}$-theory that $\cpx$ has countable fan tightness and the Reznichenko property if and only if all finite powers of $X$ have the Hurewicz property (\cite{Kocinac2003Scheepers}, Theorem 21). To prove the following result we need the following Lemma \cite[Lemma 2.1]{Kocinac2011}.

\begin{lemma}
\label{l:property of G regular space}
 If $G$ is a topological group and $X$ is a $G^{\ast}$-regular space, then there is $g \in G \backslash \{e\}$ such that for each open set $U \subset X$ and each non-empty finite set $F \subset U$ there is $f_{F,U} \in C_{p}(X, G)$ satisfying $f_{F,U} (F) \subseteq \{e\}$ and $f_{F,U} (X \backslash U) \subseteq \{g^{-1}\}$.
\end{lemma}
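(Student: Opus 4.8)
The plan is to reduce the lemma to finitely many applications of $G^{\ast}$-regularity, glued together by the group operation. I would fix once and for all the non-identity element $g \in G \setminus \{e\}$ witnessing the $G^{\ast}$-regularity of $X$; this is precisely the $g$ that will appear in the conclusion. Given an open set $U \subseteq X$ and a non-empty finite set $F = \{x_{1}, \dots, x_{n}\} \subseteq U$, the goal is to manufacture a single continuous $f_{F,U}$ that is identically $e$ on $F$ and identically $g^{-1}$ on the closed set $X \setminus U$.

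First I would separate each point of $F$ from everything I want it to avoid. For each $i$, consider $C_{i} = (X \setminus U) \cup (F \setminus \{x_{i}\})$. Since $X$ is $T_{1}$, finite sets are closed, so $C_{i}$ is closed, and $x_{i} \notin C_{i}$ because $x_{i} \in U$ and $x_{i} \notin F \setminus \{x_{i}\}$. Applying $G^{\ast}$-regularity to the pair $(x_{i}, C_{i})$ yields $h_{i} \in \cpxg$ with $h_{i}(x_{i}) = g$ and $h_{i}(C_{i}) \subseteq \{e\}$; in particular $h_{i}(x_{j}) = e$ for every $j \neq i$ and $h_{i}(X \setminus U) \subseteq \{e\}$.

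Next I would combine the $h_{i}$ into one function via the pointwise group product $h = h_{1} h_{2} \cdots h_{n}$, where $h(y) = h_{1}(y) h_{2}(y) \cdots h_{n}(y)$. This $h$ is continuous, since the diagonal map $y \mapsto (h_{1}(y), \dots, h_{n}(y))$ into $G^{n}$ is continuous and the iterated multiplication $G^{n} \to G$ is continuous. Evaluating at $x_{i}$, every factor except the $i$-th equals $e$, so $h(x_{i}) = g$, whence $h(F) = \{g\}$; evaluating on $X \setminus U$, every factor is $e$, so $h(X \setminus U) = \{e\}$. I note that this argument survives the non-abelian case, because multiplying by identity elements is order-independent.

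Finally I would left-translate to swap the two values: set $f_{F,U}(y) = g^{-1} h(y)$. Left multiplication by the constant $g^{-1}$ is a homeomorphism of $G$, so $f_{F,U}$ is continuous, i.e. $f_{F,U} \in \cpxg$, and $f_{F,U}(F) = g^{-1} g = e$ while $f_{F,U}(X \setminus U) = g^{-1} e = g^{-1}$, as required. The only genuinely delicate point is the gluing step: separating each $x_{i}$ from $X \setminus U$ \emph{alone} would leave the values $h_{i}(x_{j})$ for $j \neq i$ uncontrolled, so the product could fail to equal $g$ on all of $F$. Enlarging the closed set to $C_{i}$, so that $h_{i}$ also kills $F \setminus \{x_{i}\}$, is exactly what forces the product to collapse to $g$ pointwise on $F$. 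Everything else is routine continuity of the group operations.
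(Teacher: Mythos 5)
Your proof is correct. Note that the paper does not actually prove this lemma --- it is quoted from Kocinac's 2011 paper (Lemma 2.1 there) and used as a black box --- so there is no in-paper argument to compare against; your construction (separating each $x_{i}$ from the closed set $(X\setminus U)\cup(F\setminus\{x_{i}\})$ via the single $g$ witnessing $G^{\ast}$-regularity, taking the pointwise product, and left-translating by $g^{-1}$) is the standard one, and every step checks out, including the observation that the product collapses correctly without any commutativity assumption on $G$.
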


\begin{theorem}\label{l:NS condition for reznichenko property in terms of hurewicz property}
Let $X$ be an $\omega$-Lindelof space and $G$ a metric group. Let $X$ be $G^{\ast}$-regular then $\cpxg$ has countable fan tightness and Reznichenko property if and only if each finite power of $X$ has Hurewicz property.
\end{theorem}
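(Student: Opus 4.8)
The plan is to route both implications through the covering selection principle $S_{fin}(\Omega, \Omega^{gp})$ on $X$, exactly as Theorems \ref{t:NS condition for count fan tightness of cpxg} and \ref{t:NS condition Hurewicz property} invite; the genuinely new work is to show that ``$\cpxg$ has countable fan tightness together with the Reznichenko property'' is equivalent to ``$X$ satisfies $S_{fin}(\Omega, \Omega^{gp})$.'' Since $\cpxg$ is a topological group under the pointwise operations, it is homogeneous, so the Reznichenko property (and fan tightness) need only be checked at the identity $\mathbf{e}$, the constant map with value $e$; I would record this reduction first. I would then note the purely combinatorial fact that, at $\mathbf{e}$, having countable fan tightness and the Reznichenko property \emph{simultaneously} is precisely the local principle $S_{fin}(\Omega_{\mathbf{e}}, \Omega_{\mathbf{e}}^{gp})$: given $(A_n)\in\Omega_{\mathbf{e}}$, fan tightness supplies finite selections whose union lies in $\Omega_{\mathbf{e}}$, and the Reznichenko property regroups that union (after a routine reindexing distributing its elements back among the $A_n$) into a family witnessing membership in $\Omega_{\mathbf{e}}^{gp}$; conversely $\Omega_{\mathbf{e}}^{gp}\subseteq\Omega_{\mathbf{e}}$ recovers fan tightness and the constant sequence recovers Reznichenko.

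The heart of the argument is a duality, built on Lemma \ref{l:property of G regular space}, between $\omega$-covers of $X$ and elements of $\Omega_{\mathbf{e}}$. Fix $g\in G\setminus\{e\}$ and a metric $\rho$ on $G$ as in that lemma, and fix $0<\delta<\rho(e,g^{-1})$. To an $\omega$-cover $\mathscr{U}$ I associate $A_{\mathscr{U}}=\{f_{F,U}: U\in\mathscr{U},\ F\subseteq U \text{ finite}\}\subseteq\cpxg$; since $\mathscr{U}$ is an $\omega$-cover and $f_{F,U}(F)\subseteq\{e\}$, every basic neighborhood $\{h:\rho(h(x),e)<\delta,\ x\in F\}$ of $\mathbf{e}$ meets $A_{\mathscr{U}}$, so $A_{\mathscr{U}}\in\Omega_{\mathbf{e}}$. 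Conversely, to $A\in\Omega_{\mathbf{e}}$ I associate the open family $\{h^{-1}(B_{\rho}(e,\delta)): h\in A\}$; density of $A$ at $\mathbf{e}$ makes it an $\omega$-cover, while the clause $f_{F,U}(X\setminus U)\subseteq\{g^{-1}\}$ together with $\delta<\rho(e,g^{-1})$ keeps the members proper, so none equals $X$. I would verify that these passages are mutually inverse up to the equivalences that matter and, crucially, that they transport the groupable partition structure in both directions: the neighborhood-meeting condition defining $\Omega_{\mathbf{e}}^{gp}$ translates into the condition that all but finitely many blocks of the partitioned cover contain any prescribed finite set, which, under the $\omega$-Lindelof hypothesis, upgrades to the compact-set formulation of $\Omega^{gp}$.

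With the duality in hand the theorem follows. For the forward implication, assume $\cpxg$ has countable fan tightness and the Reznichenko property, so $S_{fin}(\Omega_{\mathbf{e}}, \Omega_{\mathbf{e}}^{gp})$ holds; given a sequence of $\omega$-covers of $X$ I push them into $\Omega_{\mathbf{e}}$, apply the selection principle to obtain finite subfamilies whose union is groupable in $\Omega_{\mathbf{e}}$, and pull back to a groupable $\omega$-cover, establishing $S_{fin}(\Omega, \Omega^{gp})$ for $X$; Theorem \ref{t:NS condition Hurewicz property} then yields the Hurewicz property for every finite power. For the converse, assume every finite power of $X$ is Hurewicz, so $X$ satisfies $S_{fin}(\Omega, \Omega^{gp})$ by Theorem \ref{t:NS condition Hurewicz property}. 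Because $\Omega^{gp}\subseteq\Omega$, this gives $S_{fin}(\Omega, \Omega)$ and hence, by Theorem \ref{t:NS condition for count fan tightness of cpxg}, $\cpxg$ has countable fan tightness; for the Reznichenko property I take any $A\in\Omega_{\mathbf{e}}$, form its associated $\omega$-cover, run $S_{fin}(\Omega, \Omega^{gp})$ on the constant sequence of that cover, and translate the resulting groupable $\omega$-cover back into the required countably infinite disjoint family of finite subsets of $A$ meeting every neighborhood of $\mathbf{e}$ cofinitely; homogeneity then promotes this to the Reznichenko property at every point.

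The main obstacle I anticipate is the faithful transport of the groupable partition across the duality, namely matching the compact-set formulation of a groupable $\omega$-cover with the neighborhood formulation of a groupable element of $\Omega_{\mathbf{e}}$. This is exactly where the metric-group hypothesis (furnishing a single scale $\delta$ and a genuine metric neighborhood base at $\mathbf{e}$) and the $\omega$-Lindelof hypothesis (letting one pass between finite and compact witnesses while keeping covers countable) are both essential, and where the bookkeeping of the functions $f_{F,U}$ from Lemma \ref{l:property of G regular space} must be carried out carefully.
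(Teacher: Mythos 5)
Your proposal is correct and follows essentially the same route as the paper: both directions hinge on the duality, via Lemma \ref{l:property of G regular space}, between $\omega$-covers of $X$ and subsets of $\cpxg$ clustering at the identity, combined with the groupable-cover characterization of the Hurewicz property in Theorem \ref{t:NS condition Hurewicz property}. Your explicit factoring through $S_{fin}(\Omega_{\mathbf{e}},\Omega_{\mathbf{e}}^{gp})$ and homogeneity is only a cleaner packaging of what the paper's back-and-forth construction does implicitly.
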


\begin{proof}
Suppose that $\cpxg$ has countable fan tightness and Reznichenko property.
Let $U_{n}$ be a sequence of $\omega$-covers of $X$. Let $F$ be a finite subset of $X$ then since $U_{n}$ is an $\omega$-cover of $X$ we can find $U_{n,F}\in U_{n}$ such that $F\subset U_{n,F}$. Then by lemma \eqref{l:property of G regular space} we can find $g\in G$ and $f_{F,U_{n,F}}\in\cpxg$
such that $f_{F,U_{n,F}}(F)=\{e\}$, and $f_{F,U_{n,F}}(X\backslash U)\subset \{g^{-1}\}$. Take $A_{n}=\{f_{F,U_{n,F}}: F\subset X$ is finite and $U\in U_{n}\}\}$. Then $f_{e}\in \overline{A_{n}}$ for every $n\in \mathbb{N}$.

Since $\cpxg$ have Reznichenko property, for each $n \in \mathbb{N}$ we can find a sequence of finite subsets $B_n$ of $A_{n}$ and for each neighborhood $U$ of $f_{e}$, $U\cap B_{n}\neq \phi$, for finitely many $n\in\mathbb{N}$. Let $V_{n}$ be the sequence of sets of the form $U_{n,F}$ such that $f_{F,U_{n,F}}\in B_{n}$. Then $V_{n}\subset U_{n}$ for every $n\in \mathbb{N}$. Let $D$ be a finite subset of $X$. Now we will show that $E\subset V$ for some $V\in V_{n}$. Consider the neighborhood $N(D,O_{k})$ of $f_{e}$ such that $N(D,O_{k})\cap B_{n}\neq \phi$ for $n\geq n_{0}$ for some $n_{0}\in\mathbb{N}$. Let $f_{F,U_{n,F}}\in N(D,O_{k})$, that is for $x\in D,f_{F,U_{n,F}}(x)\in O_{k} $. Therefore $D\subset U_{n,F}\in V_{n}$. This proves Hurewicz property of $X^{n}$ for each $n\in\mathbb{N}$.

Conversely assume that $X^{n}$ has Hurewicz property for every $n\in\mathbb{N}$. Let $\O_{n} : n\in\mathbb{N}$ be the countable local base at $e \in G$. Let $A_{n}$ be a sequence of subsets of $\cpxg$ such that $f_{e}\in\cap_{n\in\mathbb{N}}\overline{A_{n}}$. Let $D$ be a finite subset of $X$. Then the neighborhood $N(D,O_{1})$ of $f_{e}$ has non-empty intersection with $A_{1}$. Take $f_{F,1}\in A_{1}$. Since $f_{F,1}$ is continuous, choose for each $x\in F$ an open set $V_{x}$ such that $f_{F,1}(V_{x})\subset O_{1}$, and set $V_{F,1}=\cup_{x\in F}V_{x}$. Then the collection $\mathcal{V}_{1}=\{V_{F,1}:F\subset X$ finite $\}$. Clearly $\mathcal{V}_{1}$ is a $\omega$ cover of $X$. Similar way we can construct $\mathcal{V}_{n}$ for $n\geq 2$. Since $X$ satisfies Hurewicz property and also by using theorem \eqref{t:NS condition Hurewicz property} there exists finite sets $\mathcal{W}_{n}\subset \mathcal{V}_{n},n\in \mathbb{N}$ such that every finite subset of $X$ is a member of $\mathcal{W}_{n}$ for all but finitely many $n$. Now without loss of generality, we can assume that $\mathcal{W}_{n}'s$ are pairwise disjoint. That is, $W=\cup_{n\in\mathbb{N}}\mathcal{W}_{n}$ is a groupable $\omega$ cover of $X$. Let $\mathcal{W}_{n}=\{ V_{F_{1}^{(n)}}, V_{F_{2}^{(n)}}, \hdots V_{F_{k}^{(n)}}\}$, $n\in \mathbb{N}$. For $n\in\mathbb{N}$, let $B_{n}\subset A_{n}$ be the set of all functions in $A_{n}$ such that $V_{F_{i}^{(n)}}\in \mathcal{W}_{n}, i\leq k_{n}$. Then $f_{F,i}(V_{x})\subset O_{i}$. This implies $\cpxg$ satisfies Reznichenko property. Countable tightness of $\cpxg$ will implies from theorem \eqref{t:NS condition for count fan tightness of cpxg} and \eqref{t:NS condition Hurewicz property}.
\end{proof}

Let us see the fan tightness and Hurewicz number defined in \cite{Lin2006} by  Lin.
\begin{definition}[\cite{Lin2006}]
The fan tightness of a space $X$ is defined by \\$vet(X) = \sup\{vet(X, x) \colon x \in X\}$, where $vet(X, x) = \omega + \min\{\lambda \colon$for each family $\{A_{\gamma}\}_{\gamma < \lambda}$ of subsets of $X$ with $x \in \bigcap_{\gamma < \lambda}\overline{A_{\gamma}}$ there is a subset $B_{\gamma}\subset A_{\gamma}$ with $\lvert B_{\gamma} \rvert<\lambda$ for each $\gamma < \lambda$ such that $x \in \overline{\bigcup_{\gamma<\lambda}{B_{\gamma}}\}}$.
\end{definition}

Note that, A space $X$ has countable fan tightness if and only if $vet(X) =\omega$.
\begin{definition}[\cite{Lin2006}]
Let $\alpha$ be a network of compact subsets of a space $X$, which is closed under finite unions and closed subsets. An $\alpha$-cover of a space $X$ is a family of subsets of $X$ such that every member of $\alpha$ is contained in some member of this family. An $\alpha$-cover is called a $k$-cover if $\alpha$ is the set of all compact subsets of $X$. An $\alpha$-cover is called a $\omega$-cover if $\alpha$ is the set of all finite subsets of $X$. The $\alpha$-Hurewicz number of $X$ is defined by $\alpha H(X)= \omega + \min \{\lambda \colon$ for each family $\{ \mathcal{U}_{\gamma}\} _{\gamma < \lambda}$ of open $\alpha$-covers of $X$ there is a subset $\mathcal{B}_{\gamma}\subset \mathcal{U}_{\gamma}$ with $\lvert \mathcal{B}_{\gamma} \rvert < \lambda $
for each $\gamma < \lambda$ such that $\bigcup_{\gamma < \lambda}\mathcal{B}_{\gamma}$ is an $\alpha$ cover of $X\}$.

The $\alpha$-Hurewicz number of $X$ is called the Hurewicz number of $X$ and written $H(X)$ if $\alpha$ consists of the singleton of $X$. A space $X$ is Hurewicz space if and only if $H(X) = \omega$.
\end{definition}

\begin{theorem}
Let $G$ be a metric group and $X$ be a $G^{\ast}$-regular space. Then $vet(\cpxg)= \sup \{H(X^{n})\}$, for every natural number $n$.
\end{theorem}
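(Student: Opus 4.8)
The plan is to prove the equality as two inequalities, after first collapsing the supremum over base points. Since $\cpxg$ is a topological group under pointwise operations it is topologically homogeneous, so $vet(\cpxg)=vet(\cpxg,f_{e})$, where $f_{e}$ is the constant function at the identity $e\in G$; thus it suffices to compute the local fan tightness at $f_{e}$. Because $G$ is metric, fix a countable base $\{O_{k}:k\in\N\}$ of neighbourhoods of $e$, and let $g\in G\setminus\{e\}$ be the element supplied by Lemma \ref{l:property of G regular space}, chosen so that $g^{-1}\notin O_{1}$ (possible as $G$ is Hausdorff). Basic neighbourhoods of $f_{e}$ are then $N(D,O_{k})=\{h\in\cpxg:h(D)\subseteq O_{k}\}$ for finite $D\subset X$. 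I will pass through the intermediate invariant $\Omega H(X)$, the $\alpha$-Hurewicz number of $X$ for $\alpha$ the family of finite subsets (so that $\alpha$-covers are exactly $\omega$-covers), proving $vet(\cpxg,f_{e})=\Omega H(X)=\sup_{n}H(X^{n})$.

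The core is a cardinal-preserving dictionary, built from Lemma \ref{l:property of G regular space} and continuity, between families $\{A_{\gamma}\}_{\gamma<\lambda}$ with $f_{e}\in\bigcap_{\gamma<\lambda}\overline{A_{\gamma}}$ and families of $\omega$-covers of $X$. For $\Omega H(X)\le vet(\cpxg)$ I would begin with $\omega$-covers $\{\mathscr{U}_{\gamma}\}_{\gamma<\lambda}$ and set $A_{\gamma}=\{f_{F,U}:U\in\mathscr{U}_{\gamma},\ F\subset U\ \text{finite}\}$, with $f_{F,U}$ as in Lemma \ref{l:property of G regular space}; the $\omega$-cover property gives $f_{e}\in\overline{A_{\gamma}}$ since $f_{D,U}(D)\subseteq\{e\}\subseteq O_{k}$ whenever $D\subseteq U\in\mathscr{U}_{\gamma}$. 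Applying $vet(\cpxg,f_{e})\le\lambda$ produces $B_{\gamma}\subset A_{\gamma}$ with $\lvert B_{\gamma}\rvert<\lambda$ and $f_{e}\in\overline{\bigcup_{\gamma}B_{\gamma}}$; setting $\mathscr{V}_{\gamma}=\{U:f_{F,U}\in B_{\gamma}\ \text{for some }F\}$, the relations $f_{F,U}(X\setminus U)\subseteq\{g^{-1}\}$ and $g^{-1}\notin O_{1}$ force any finite $D$ witnessed through $N(D,O_{1})$ to lie inside a selected $U$, so $\bigcup_{\gamma}\mathscr{V}_{\gamma}$ is an $\omega$-cover with $\lvert\mathscr{V}_{\gamma}\rvert\le\lvert B_{\gamma}\rvert<\lambda$. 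This gives $\Omega H(X)\le\lambda$.

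For the reverse inequality $vet(\cpxg,f_{e})\le\Omega H(X)$ I run the dictionary backwards: from $\{A_{\gamma}\}$ I extract, for each $\gamma$ and each level $k$, an $\omega$-cover $\mathscr{V}_{\gamma,k}$ by choosing for every finite $D$ some $h\in A_{\gamma}\cap N(D,O_{k})$ and, by continuity of $h$, an open $V\supseteq D$ with $h(V)\subseteq O_{k}$. Applying the selection principle for $\Omega H(X)$ at each fixed $k$ and reassembling the functions that produced the chosen sets yields the required $B_{\gamma}$, since for every pair $(D,k)$ the selected $\omega$-cover at level $k$ contains some $V\supseteq D$ arising from a selected $h$ with $h(D)\subseteq O_{k}$. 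To finish I identify $\Omega H(X)=\sup_{n}H(X^{n})$, the cardinal form of the equivalence ``$X\models S_{fin}(\Omega,\Omega)$ iff every $X^{n}$ is Menger'' (\cite[Theorem 14]{Kocinac2003Scheepers}), carried out through the correspondence $\mathscr{U}\mapsto\{U^{n}:U\in\mathscr{U}\}$ between $\omega$-covers of $X$ and open covers of $X^{n}$ and its inverse, verified to preserve selections of size $<\lambda$. Chaining $vet(\cpxg)=vet(\cpxg,f_{e})=\Omega H(X)=\sup_{n}H(X^{n})$ completes the argument; in effect this mirrors Lin's computation in \cite{Lin2006} for $\cpx$, with the group-valued separators $f_{F,U}$ of Lemma \ref{l:property of G regular space} playing the role of the real-valued ones.

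The step I expect to be the main obstacle is the cardinal bookkeeping in the backward direction. Since the base $\{O_{k}\}$ at $e$ is only countable while the index set has size $\lambda$, reconstructing $B_{\gamma}$ forces a union over $k\in\N$ of selected families each of size $<\lambda$: for $\lambda$ of uncountable cofinality this union is absorbed and remains $<\lambda$, and the case $\lambda=\omega$ must be handled as in the proof of Theorem \ref{l:NS condition for reznichenko property in terms of hurewicz property}, pairing each $A_{\gamma}$ with a single level so that $B_{\gamma}$ stays finite. The genuinely delicate case is $\mathrm{cf}(\lambda)=\omega$ with $\lambda>\omega$, where one must arrange a single simultaneous selection covering all pairs $(D,k)$ without the suprema of the $\lvert\mathscr{V}_{\gamma,k}\rvert$ climbing up to $\lambda$; isolating that uniform selection is where the care is concentrated.
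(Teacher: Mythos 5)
Your proposal follows the same underlying mechanism as the paper's proof --- the dictionary, built from Lemma \ref{l:property of G regular space}, between families $\{A_\gamma\}$ accumulating at $f_e$ and transfinite families of covers --- but you organize it differently. The paper works directly with open covers of $X^{n}$: it introduces the auxiliary property $P_{n,\gamma}$ (finite families $\mathcal{V}$ of open subsets of $X$ whose $n$-fold products refine $\mathcal{U}_\gamma$) and threads the separating functions $f_{V,\gamma}$ through that refinement, so the passage between $\omega$-covers of $X$ and covers of $X^{n}$ is absorbed into the main argument. You instead factor through the intermediate invariant $\Omega H(X)$ and isolate the identity $\Omega H(X)=\sup_n H(X^{n})$ as a separate step. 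That is a cleaner decomposition, but note that the ``inverse'' of your correspondence $\mathscr{U}\mapsto\{U^{n}:U\in\mathscr{U}\}$ is not literal root-taking: to get $\sup_n H(X^{n})\le\Omega H(X)$ you must reproduce exactly the paper's $P_{n,\gamma}$ refinement, so that step is not free. Your homogeneity reduction to $f_e$ and the forward inequality (using $g^{-1}\notin O_1$ to force $D\subset U$) are sound and match the paper's first half in substance.

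On the obstacle you flag in the backward direction: it is real, and in fact the paper's own proof suffers from it silently --- in Case I it sets $B_\gamma=\bigcup_{n\in\N}B_{n,\gamma}$ and asserts $\lvert B_\gamma\rvert<\lambda$, which a countable union of sets of size $<\lambda$ need not satisfy when $\mathrm{cf}(\lambda)=\omega$. The fix you are looking for is a partition trick rather than a union: split the index set $\lambda$ into countably many pieces $\Lambda_k$, each of size $\lambda$, assign level $O_k$ (respectively power $X^{k}$) to the piece $\Lambda_k$, and apply the selection hypothesis once per piece. Each $A_\gamma$ then contributes a single selection of size $<\lambda$ rather than a countable union of them, a neighborhood $N(D,O_k)$ is served by the $\omega$-cover obtained from the piece $\Lambda_k$, and the argument goes through uniformly for $\lambda=\omega$ and for $\mathrm{cf}(\lambda)=\omega$ alike. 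With that adjustment your outline closes, and it actually repairs a step the paper leaves unjustified.
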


\begin{proof}
Assume that $vet(\cpxg)=\lambda$. We need to show that $\sup\{H(X^{n})\}=\lambda$, for every natural number $n$. To see this, let $\{\mathcal{U}_{\gamma}\}_{\gamma < \lambda}$ be a family of open covers of the space $X^{n}$. Now, we define a property $P_{n,\gamma}$ for the family $\mathcal{V}$ of subsets of $X$ that for every $\gamma < \lambda$ and for each $\{V_{i}\}_{i\leq n}\subset \mathcal{V}$ there is $U\in \mathcal{U}_{\gamma}$ such that $\Pi_{i=1}^{n}V_{i}\subset U$. Let $I_{n,\gamma}$ denote the family of finite open sets having the property $P_{n,\gamma}$. Now for each $\mathcal{V}\in I_{n,\gamma}$ and for $V\in \mathcal{V}$ there exists $U\in \mathcal{U}_{\gamma}$ such that $V\subset U$. By applying Lemma \eqref{l:property of G regular space}, there exists $g\in G$ and $f_{V,\gamma}\in\cpxg$ such that $f_{V,\gamma}(V)=\{e\}$ and $f_{V,\gamma}(X \backslash U)\subset\{g^{-1}\}$. Let us define $F_{\mathcal{V}}= \{ f_{V,\gamma}\in\cpxg :V\in \mathcal{V}$ and $\mathcal{V}\in I_{n,\gamma} \}$. Then, we claim that the set $A_{\gamma} = \bigcup_{\gamma < \lambda}F_{\mathcal{V}}$ is dense in $\cpxg$.

Let $W(E,O)$, (where $E$ a finite subset of $X$, and $O$ is a neighbourhood of $e\in G$), be a neighbourhood of $f_{e}$ in $\cpxg$. Since $E$ is finite, there exists $\mathcal{W}\in I_{n,\gamma}$ such that for any $(x_{1}, x_{2}, \ldots, x_{n})\in E^{n}$, there are $U \in \mathcal{U}_{\gamma}$ and a finite subset $ \{W_{i}\}_{i\leq n}\subset \mathcal{W}$ such that $(x_{1}, x_{2}, \ldots,x_{n}) \in \Pi_{i\leq n}W_{i}\subset U$. Then, $E\subset \bigcup \mathcal{W}$. Now for each $x\in E$, we define $V_{x}= \bigcap\{ W\in \mathcal{W}\colon x\in W\}$ and put $\mathcal{V}=\{ V_{x}: x\in E\}$. It is clear that the family $\mathcal{V}$ has the property $P_{n,\gamma}$ and $E\subset \bigcup\mathcal{V}$. As we already noticed that  for each  $(x_{1}, x_{2}, \ldots, x_{n}) \in E^{n}$, there are $U \in \mathcal{U}_{\gamma}$ and a finite subset $ \{W_{i}\}_{i\leq n}\subset \mathcal{W}$ such that $(x_{1}, x_{2}, \\dots, x_{n})\in \Pi_{i\leq n}W_{i}\subset U$. Since $V_{x_{i}}\subset W_{i}$, we have $\Pi_{i\leq n} V_{x_{i}}\subset U$. Choose $h\in\cpxg$ such that $f(E)=h(E)$ and $h(X\backslash U) = g^{-1}$. Then, $h\in F_{\mathcal{V}}\subset A_{\gamma}$, so $W(E,O)\bigcap A_{\gamma}\neq \phi$. Therefore, $A$ is dense in $\cpxg$.

Now fix a $g\in G$ and define $f_{g}\in C(X,G)$ such that $f_{g}(X)=g$. Then, $f_{g}\in\bigcap_{\gamma<\lambda}\overline{A_{\gamma}}$. By applying the Definition of $vet(X)$, there is a subset $B_{\gamma}\subset A_{\gamma}$ with $\lvert B_{\gamma} \rvert<\lambda$ for each $\gamma < \lambda$ such that $f_{g}\in \overline{\bigcup_{\gamma < \lambda}B_{\gamma}}$. We can find a subset $J_{n,\gamma}$ of $I_{n,\gamma}$ with $\lvert J_{n,\gamma} \rvert < \lambda$ such that $B_{\gamma}\subset \bigcup\{F_{\mathcal{V}}\colon \mathcal{V}\in J_{n,\gamma}\}$. Since $J_{n,\gamma}$ satisfies property $P_{n,\gamma}$, we have for $\mathcal{V}\in J_{n,\gamma}$ and for each $\psi = (V_{1}, V_{2}, \ldots, V_{n})\in \mathcal{V}^{n}$, take $M_{\psi}\in \mathcal{U}_{\gamma}$ such that $\Pi_{i\leq n}V_{i}\subset M_{\psi}$. Put $\mathcal{M}_{\gamma}=\{ M_{\psi}\colon \psi\in \mathcal{V}^{n},\mathcal{V}\in J_{n,\gamma}\}$. Obviously $\lvert \mathcal{M}_{\gamma} \rvert < \lambda$ and $\mathcal{M}_{\gamma}\subset \mathcal{U}_{\gamma}$. Now, we will show that $\bigcup_{\gamma < \lambda}\mathcal{M}_{\gamma}$ covers $X$.

Let $(x_{1}, x_{2}, \ldots, x_{n})\in X^{n}$ and $N$ be an open neighborhood of $f_{g}\in\cpxg$. Since $f_{g}\in \overline{\bigcup_{\gamma < \lambda}B_{\gamma}}$, there is $\gamma < \lambda$ such that $N\cap B_{\gamma}\neq \phi$, then $N\cap F_{\mathcal{V}}\neq \phi$ for some $\mathcal{V}\in I_{n,\gamma}$. Let $z\in N\cap  F_{\mathcal{V}}$, then $z(X\backslash U)=g^{-1}$ and $z(x_{i})\in N$ for every $i=1,2,\hdots,n$. Choose $V_{i}\in \mathcal{V}$ such that $x_{i}\in V_{i}$ for each $i\leq n$,  then there exists $M_{\psi}\in \mathcal{M}_{\gamma}$ such that $(x_{1}, x_{2},\hdots,x_{n})\in \Pi_{i\leq n}V_{i}\subset M_{\psi}$. So $(x_{1}, x_{2}, \ldots, x_{n})\in\bigcup(\bigcup_{\gamma<\lambda}\mathcal{M}_{\gamma})$. Hence, $H(X^{n})\leq vet(C_{p}(X,G))$.

Conversely, assume that $\sup\{H(X^{n})\}=\lambda$. Fix a decreasing local base $\{O_{n}\colon n\in \mathbb{N}\}$ for $e\in G$. Let $\{A_{\gamma}\}_{\gamma<\lambda}$ be a family of subsets of $\cpxg$ such that $f_{e}\in\bigcap_{\gamma<\lambda} \overline{A_{\gamma}}$. For each finite set $F$ of $X$ and $\gamma<\lambda$ the neighborhood $W(F,O_{1})$ of $f_{e}$ has non-empty intersection with $A_{\gamma}$. Choose $h_{x,\gamma}\in W(F,O_{1})\bigcap A_{\gamma}$. Since $h_{x,\gamma}$ is continuous, for each $x_{i}\in F$, there exists an open set $V_{x_{i}}$ such that $h_{F,\gamma}(V_{x_{i}})\subset O_{1}$. Let $U_{x,\gamma}=\Pi_{i=1}^{n}V_{x_{i}}$ be a neighborhood of $x=(x_{1}, x_{2}, \ldots, x_{n}) \in X^{n}$, then $\mathcal{U}_{n,\gamma}=\{U_{x,\gamma}\colon x\in X^{n}\}$ covers $X^{n}$. Note that for each $(y_{1}, y_{2}, \ldots, y_{n})\in U_{x,\gamma}$, $h_{x,\gamma}(y_{i})\in O_{i}$.
\begin{description}
\item[Case-I] Suppose $\lambda >\omega$. Since $H(X^{n})\leq\lambda$, we can find a family of subsets $\{ S_{n,\gamma}\}$ in $X^{n}$ with $\lvert S_{n,\gamma}\rvert < \lambda$ for each $\gamma< \lambda$ such that $\bigcup_{\gamma<\lambda}S_{n,\gamma}$ covers $X^{n}$. Note that $S_{n,\gamma}=\{U_{x,\gamma}: x\in S_{n,\gamma}\}$. Let us define for each $\gamma<\lambda,the sets B_{n,\gamma}=\{h_{x,\gamma}:x\in S_{n,\gamma}\}$ and $B_{\gamma}= \bigcup_{n\in\mathbb{N}} B_{n,\gamma}$, then $B_{\gamma}\subset A_{\gamma}$ with $\lvert B_{\gamma}\rvert<\lambda$, and $f\in\overline{\bigcup_{\gamma<\lambda}B_{\gamma}}$. Let $W(E,O)$, be a basic neighborhood of $f_{e} \in \cpxg$. Then, $\gamma<\lambda$ such that $(y_{1}, y_{2}, \ldots, y_{n})\in \bigcup S_{n, \gamma}$ and $x\in  S_{n, \gamma}$ such that $(y_{1}, y_{2}, \ldots, y_{n})\in U_{x,\gamma}$. Therefore, $h_{x,\gamma}\in B_{n,\gamma}$ and $h_{x,\gamma}\in O_{n}$ for each $i\leq n$. Hence $h_{x,\gamma}\in W(E,O)$,  (where $E=\{y_{1}, y_{2}, \ldots, y_{n}\}$) i.e. $h_{x,\gamma}\in W(E,O)\cap B_{\gamma}$. Therefore  $f_{e}\in\overline{\bigcup_{\gamma<\lambda}B_{\gamma}}$.
\item[Case-II] Suppose $\lambda =\omega$. Replace $\gamma$ with a natural number $k\geq n$. Choose $B_{k}=\bigcup_{n\leq k}B_{n,k}$ and follow Case-I. The proof will be immediate.
\end{description}
So $vet(C_{p}(X,G))\leq \sup\{H(X^{n})$ and therefore $vet(C_{p}(X,G))=\sup\{H(X^{n})\}$
\end{proof}

\section{Preservation of Menger property on $G$-equivalence}\label{s:Preservation of Menger property on G-equivalence}
\begin{definition}[\cite{Shakhmatov2010}] Two topological spaces $X$ and $Y$ are said to be $G$-equivalent if $\cpxg \cong \cpyg$.
\end{definition}

If $G=\mathbb{T}$, the circle group $\mathbb{R}\backslash \mathbb{Z}$, then $G$-equivalence can be viewed as $\mathbb{T}$-equivalence. Since $\mathbb{T}$ can be viewed as a counterpart to $\mathbb{R}$ in the theory of topological vector spaces, this line of research has significant importance in the theory of group valued continuous functions.

\begin{definition}[\cite{Shakhmatov2010}]
Two topological spaces $X$ and $Y$ are said to be $\mathbb{T}$-equivalent if $C_{p}(X,\mathbb{T}) \cong C_{p}(Y,\mathbb{T})$.
\end{definition}

\begin{theorem}[\cite{Shakhmatov2010}, Theorem 10.7]\label{t:Preservation of T equivalence}
$\mathbb{T}$ equivalence preserves the following properties:
\begin{enumerate}
\item pseudocompactness,
\item the cardinal invariant $l^{*}$ (defined in \cite{arkhangelskii1986}),
\item property of being a Lindelöf $\sum$-space,
\item $\sigma$-compactness,
\item compactness,
\item the property of being compact metrizable,
\item the (finite) number of connected components,
\item connectedness,
\item total disconnectedness.
\end{enumerate}
\end{theorem}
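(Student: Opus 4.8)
This is the compilation theorem of Shakhmatov and Spevak, so the quickest route is simply to cite \cite{Shakhmatov2010}; to indicate why it is true I would organise an independent proof as follows. The nine assertions all have the shape \emph{a topological property of $X$ is an invariant of the topological group $C_{p}(X,\mathbb{T})$}, so the unifying plan is, for each property $P$ in the list, to isolate a condition $Q_{P}$ phrased purely in terms of the topological--group structure of $C_{p}(X,\mathbb{T})$ and to prove the equivalence \emph{$X$ has $P$ if and only if $C_{p}(X,\mathbb{T})$ has $Q_{P}$}. Because $\mathbb{T}$--equivalence is an isomorphism $C_{p}(X,\mathbb{T})\cong C_{p}(Y,\mathbb{T})$ of topological groups, any such intrinsic condition $Q_{P}$ passes automatically from the group attached to $X$ to the group attached to $Y$, and the reverse implication of the characterisation then returns $P$ for $Y$. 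Thus all the content lies in manufacturing the characterisations $Q_{P}$; the preservation step itself is formal.

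I would dispose first of the connectedness items---the (finite) number of connected components, connectedness, and total disconnectedness---since these are the most transparent. They are all governed by the clopen (Boolean) algebra of $X$, which is visible inside $C_{p}(X,\mathbb{T})$ through its locally constant members and, more robustly, through the discrete homotopy quotient of the group (morally $H^{1}(X;\mathbb{Z})$ together with the component data). Connectedness corresponds to this data collapsing to the constants, the finiteness and the exact number of components to its having the corresponding finite rank, and total disconnectedness to its separating the points of $X$; each is a statement about the group alone, so all three are $\mathbb{T}$--invariant.

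The genuinely hard cluster is pseudocompactness, $\sigma$--compactness, compactness, compact metrizability and the Lindel\"{o}f--$\Sigma$ property, together with the invariant $l^{*}$. The obstruction is that, since $\mathbb{T}$ is compact, \emph{every} continuous map $X\to\mathbb{T}$ is automatically bounded, so the familiar device ``$X$ is pseudocompact iff every real-valued continuous function on it is bounded'' is unavailable, and these covering/compactness properties must instead be detected through the finer topology of the function space---through equicontinuous or uniformly small families, the completeness of suitable subgroups, and the duality pairing compact subsets of $X$ against neighbourhoods of the identity in $C_{p}(X,\mathbb{T})$. I expect this cluster to be the main obstacle. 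Since assembling these characterisations in a space-free form is precisely the technical achievement of \cite{Shakhmatov2010}, I would invoke their development directly, reconstructing at most the pivotal characterisation (say, of $\sigma$--compactness) and layering the others, including $l^{*}$ and compact metrizability, on top of it.
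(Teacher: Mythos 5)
The paper offers no proof of this statement at all: it is imported verbatim as Theorem~10.7 of \cite{Shakhmatov2010}, so your decision to simply cite that reference is exactly what the authors do, and your accompanying sketch (recast each property as an intrinsic topological-group condition on $C_{p}(X,\mathbb{T})$ so that it transfers along the isomorphism) is the standard strategy underlying the cited result. Nothing further is required here.
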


\begin{lemma}[$\mathbb{T}$ equivalence preserves Menger property]\label{l:Menger property on T equivalence}
Suppose $X$ and $Y$ are $\mathbb{T}$-equivalent. If $X$ is a Cech complete Menger space. then $Y$ is also a Menger space.
\end{lemma}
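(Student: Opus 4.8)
The plan is to avoid manipulating open covers of $Y$ directly and instead reduce the Menger property of $Y$ to a property already known to be a $\mathbb{T}$-equivalence invariant, namely $\sigma$-compactness (item (4) of Theorem \ref{t:Preservation of T equivalence}). The strategy rests on upgrading the hypothesis on $X$: although $X$ is only assumed Menger, the extra assumption that $X$ is \v{C}ech complete forces $X$ to be $\sigma$-compact. Once this is in hand, $\sigma$-compactness transfers from $X$ to $Y$ through the $\mathbb{T}$-equivalence, and then $\sigma$-compactness of $Y$ yields the Menger property via the implication chain recorded in the preliminaries.

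The crucial step is therefore the claim that a \v{C}ech complete Menger space is $\sigma$-compact. First I would recall two standard facts: the Menger property is inherited by closed subspaces, and the Baire space $\mathbb{N}^{\mathbb{N}}$ (equivalently, the space of irrationals) fails to be Menger. Next I would invoke the classical structure dichotomy for \v{C}ech complete spaces, in the Hurewicz--Saint-Raymond circle of ideas: a \v{C}ech complete space is either $\sigma$-compact or contains a closed copy of $\mathbb{N}^{\mathbb{N}}$. Combining these, if $X$ were not $\sigma$-compact it would contain a closed copy of $\mathbb{N}^{\mathbb{N}}$, and this closed subspace would inherit the Menger property from $X$, contradicting the non-Mengerness of $\mathbb{N}^{\mathbb{N}}$. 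Hence $X$ must be $\sigma$-compact.

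With $X$ now known to be $\sigma$-compact, I would apply Theorem \ref{t:Preservation of T equivalence}(4): since $X$ and $Y$ are $\mathbb{T}$-equivalent and $\sigma$-compactness is preserved under $\mathbb{T}$-equivalence, $Y$ is $\sigma$-compact. Finally, the implication diagram in the preliminaries gives $\sigma$-compact $\Rightarrow$ Hurewicz $\Rightarrow$ Menger, so $Y$ is a Menger space, which completes the argument.

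The main obstacle is precisely the \v{C}ech complete dichotomy used in the second paragraph: one must justify that a non-$\sigma$-compact \v{C}ech complete space embeds $\mathbb{N}^{\mathbb{N}}$ as a closed subspace. Everything surrounding it --- closed-hereditariness of the Menger property, the non-Mengerness of $\mathbb{N}^{\mathbb{N}}$, the appeal to the preservation theorem, and the standard selection-principle implications --- is routine bookkeeping, so the weight of the proof sits on correctly citing and applying this structural result.
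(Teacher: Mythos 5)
Your proposal follows essentially the same route as the paper: both arguments reduce the problem to showing that a \v{C}ech complete Menger space is $\sigma$-compact (the paper simply cites Tall's theorem for this step, whereas you sketch its proof via the Hurewicz dichotomy and the closed-hereditariness of the Menger property) and then transfer $\sigma$-compactness through the $\mathbb{T}$-equivalence using Theorem \ref{t:Preservation of T equivalence}(4), concluding via $\sigma$-compact $\Rightarrow$ Hurewicz $\Rightarrow$ Menger. The argument is correct and matches the paper's approach.
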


\begin{proof}
Let $X$ be a Cech complete Menger space. Then from \cite[Theorem 1.2]{Tall2017}, $X$ is a $\sigma$-compact space. Since $X$ and $Y$ are $\mathbb{T}$-equivalent and from Theorem \eqref{t:Preservation of T equivalence}, $\sigma$-compactness preserves $\mathbb{T}$-equivalence, so $Y$ is also a Menger space.
\end{proof}

\begin{definition}
A topological group $G$ is precompact if it can be embedded as a subgroup of a compact group.
\end{definition}

\begin{theorem}[\cite{Shakhmatov2010}, Corollary 10.5]\label{t: T equivalence implies G equivalence}
For a precompact Abelian group G, $\mathbb{T}$-equivalence implies $G$-equivalence.
\end{theorem}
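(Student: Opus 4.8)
The plan is to realize $\cpxg$ as a naturally defined subgroup of a power of $C_{p}(X,\mathbb{T})$, and then to show that any topological group isomorphism witnessing $\mathbb{T}$-equivalence carries this subgroup onto its counterpart over $Y$. Since $G$ is a precompact Abelian group, its completion $\overline{G}$ is a compact Abelian group, and Pontryagin duality supplies a topological embedding of $\overline{G}$ as a closed subgroup of $\mathbb{T}^{\kappa}$, where $\kappa$ is the cardinality of the (discrete) dual $\widehat{\overline{G}}$. Composing with $G \hookrightarrow \overline{G}$ fixes a topological embedding $e \colon G \hookrightarrow \mathbb{T}^{\kappa}$ of $G$ as a subgroup. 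First I would record the canonical identification $C_{p}(X,\mathbb{T}^{\kappa}) \cong C_{p}(X,\mathbb{T})^{\kappa}$ (a map into a product is continuous iff each coordinate is, and pointwise convergence corresponds to the product topology), which is an isomorphism of topological groups. Under it, $e$ induces a topological embedding of $\cpxg$ onto the subgroup
\[
S_{X} = \{ (f_{\alpha})_{\alpha<\kappa} \in C_{p}(X,\mathbb{T})^{\kappa} \colon (f_{\alpha}(x))_{\alpha<\kappa} \in e(G) \text{ for all } x \in X \}.
\]

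The core of the argument is the case where $G$ is compact, so that $e(G)=e(\overline{G})$ is closed in $\mathbb{T}^{\kappa}$. Then $e(G)$ equals the intersection of the kernels of the characters in its annihilator $A \le \widehat{\mathbb{T}^{\kappa}} = \mathbb{Z}^{(\kappa)}$; each $\chi=(n_{\alpha})_{\alpha}\in A$ has finite support and determines a continuous homomorphism $\widehat{\chi}_{X} \colon C_{p}(X,\mathbb{T})^{\kappa} \to C_{p}(X,\mathbb{T})$ by $(f_{\alpha})_{\alpha} \mapsto \sum_{\alpha} n_{\alpha} f_{\alpha}$. Evaluating at points shows $S_{X} = \bigcap_{\chi \in A} \ker \widehat{\chi}_{X}$. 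Now let $\Phi \colon C_{p}(X,\mathbb{T}) \to C_{p}(Y,\mathbb{T})$ be a topological group isomorphism realizing the $\mathbb{T}$-equivalence, and let $\Phi^{\kappa}$ be the induced coordinatewise isomorphism of the $\kappa$-th powers. Because each $\widehat{\chi}$ is assembled solely from the group operations (addition, inversion, integer multiples), and $\Phi$ preserves these, one obtains $\Phi \circ \widehat{\chi}_{X} = \widehat{\chi}_{Y} \circ \Phi^{\kappa}$ for every $\chi \in A$. Hence $\Phi^{\kappa}(\ker \widehat{\chi}_{X}) = \ker \widehat{\chi}_{Y}$, and intersecting over $A$ gives $\Phi^{\kappa}(S_{X}) = S_{Y}$. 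Transporting back through the embeddings yields a topological group isomorphism $\cpxg \cong \cpyg$, that is, $G$-equivalence.

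The main obstacle is the passage from compact $G$ to merely precompact $G$. When $G$ is not compact, $e(G)$ is a dense but non-closed subgroup of the compact group $e(\overline{G})$, so it is no longer the intersection of a family of character kernels; consequently $\cpxg$ is a proper subgroup of the $S_{X}$ above and the clean kernel description fails. To handle this I would first apply the compact case to $\overline{G}$, obtaining $\Phi^{\kappa}(S_{X}) = S_{Y}$ with $S_{X} \cong C_{p}(X,\overline{G})$, and then argue that $\Phi^{\kappa}$ additionally respects the finer membership condition ``$(f_{\alpha}(x))_{\alpha} \in e(G)$ for all $x$''. The delicate point is precisely that this condition is \emph{not} expressible through continuous characters, since every continuous character of $e(\overline{G})$ already vanishes on or extends from the dense subgroup $e(G)$; thus duality alone cannot separate $G$ from $\overline{G}$ at the level of $C_{p}$. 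This density/approximation step is where the real work lies, and it is the part I expect to require the structural results of \cite{Shakhmatov2010} on the groups $\cpxg$ rather than Pontryagin duality by itself.
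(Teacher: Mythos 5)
The paper does not actually prove this statement---it is quoted verbatim as \cite[Corollary 10.5]{Shakhmatov2010}---so there is no in-paper argument to measure you against; I can only assess your proposal on its own terms. The compact half of your argument is sound: embedding a compact Abelian $G$ as a closed subgroup of $\mathbb{T}^{\kappa}$ via Pontryagin duality, identifying $C_{p}(X,\mathbb{T}^{\kappa})$ with $C_{p}(X,\mathbb{T})^{\kappa}$, and cutting out $S_{X}$ as an intersection of kernels of the maps $\widehat{\chi}_{X}$ (which are built purely from the group operations and hence commute with $\Phi^{\kappa}$) does yield $\Phi^{\kappa}(S_{X})=S_{Y}$ and hence $\overline{G}$-equivalence. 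But the theorem is about \emph{precompact} $G$, and that is exactly the part you leave undone: you correctly diagnose that for non-closed $e(G)$ the condition ``$(f_{\alpha}(x))_{\alpha}\in e(G)$ for all $x\in X$'' is not captured by continuous characters of $\mathbb{T}^{\kappa}$, and then you stop. As written, the proposal proves a different (weaker) statement, namely that $\mathbb{T}$-equivalence implies $\overline{G}$-equivalence for the compact completion. The obstruction is structural, not cosmetic: the defining condition of your subgroup quantifies over points of $X$, while the isomorphism $\Phi$ is an abstract topological group isomorphism of $C_{p}(X,\mathbb{T})$ onto $C_{p}(Y,\mathbb{T})$ and carries no a priori information about evaluation at points, so ``transporting the finer membership condition'' is precisely the theorem, not a routine verification.

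To close the gap one must replace the pointwise condition by one expressed entirely in the topological group structure of $C_{p}(X,\mathbb{T})$. The standard device is to use the continuous character group of $C_{p}(X,\mathbb{T})$: since (by Comfort--Ross) the topology of a precompact Abelian $G$ is initial with respect to $\widehat{G}$, one has $\cpxg\cong\{(f_{\chi})_{\chi\in\widehat{G}}\in C_{p}(X,\mathbb{T})^{\widehat{G}}: f_{\chi_{1}}+f_{\chi_{2}}=f_{\chi_{1}+\chi_{2}}\ \text{and, for every continuous character}\ \mu\ \text{of}\ C_{p}(X,\mathbb{T}),\ (\mu(f_{\chi}))_{\chi}\in G\subseteq\mathrm{Hom}(\widehat{G},\mathbb{T})\}$; this works because the evaluations $\delta_{x}$ generate the continuous characters of $C_{p}(X,\mathbb{T})$ and $G$ is a subgroup, so quantifying over all continuous characters is equivalent to quantifying over all $\delta_{x}$. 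In that formulation every clause is invariant under an arbitrary topological group isomorphism $\Phi$ (which induces a bijection of continuous character groups), and the precompact case follows. Without some such reformulation---or an explicit appeal to the structural results of \cite{Shakhmatov2010} that you only gesture at---the proof is incomplete at its essential step.
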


By using above theorem and lemma, we get following result.
\begin{theorem}
Let $G$ be a precompact Abelian group. Suppose that $X$ and $Y$ are $G$-equivalent. If $X$ is a Cech complete Menger space, then $Y$ is also a Menger space.
\end{theorem}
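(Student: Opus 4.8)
The plan is to route the argument through $\sigma$-compactness, exactly as in the proof of Lemma \ref{l:Menger property on T equivalence}, and then to transport this invariant across the $G$-equivalence using the comparison of the two equivalence relations recorded in Theorem \ref{t: T equivalence implies G equivalence}. The guiding observation is that Mengerness together with Cech completeness is far stronger than bare Mengerness: by \cite[Theorem 1.2]{Tall2017} it upgrades all the way to $\sigma$-compactness, and $\sigma$-compactness is a topological property for which transfer results under the relevant equivalences are available.

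First I would fix that $X$, being Cech complete and Menger, is $\sigma$-compact by \cite[Theorem 1.2]{Tall2017}. The target then reduces to showing that $Y$ is $\sigma$-compact, since every $\sigma$-compact space is Menger (indeed Hurewicz). Second, I would bring in the hypotheses that $G$ is precompact Abelian and that $X$ and $Y$ are $G$-equivalent, and attempt to place $X$ and $Y$ in a common $\mathbb{T}$-equivalence class so that Lemma \ref{l:Menger property on T equivalence} applies verbatim: once $X \sim_{\mathbb{T}} Y$, the Lemma turns the Cech complete Menger space $X$ into a Menger space $Y$ and we are done.

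The hard part, and the step I expect to be the genuine obstacle, is the direction of Theorem \ref{t: T equivalence implies G equivalence}: as stated it yields $\mathbb{T}$-equivalence $\Rightarrow$ $G$-equivalence, while here we are handed $G$-equivalence and need to feed $\mathbb{T}$-equivalence into the Lemma. Thus one cannot simply quote the theorem; instead I would argue at the level of the invariant itself, showing that for precompact Abelian $G$ the property of being $\sigma$-compact is already preserved under $G$-equivalence (for instance by exhibiting $\sigma$-compactness of $X$ as a property of the topological space $C_{p}(X,G)$ that is inherited by any homeomorphic $C_{p}(Y,G)$, along the lines of Theorem \ref{t:Preservation of T equivalence}(4) for $\mathbb{T}$). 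Granting this, $Y$ is $\sigma$-compact and hence Menger, completing the proof; verifying the $G$-equivalence preservation of $\sigma$-compactness for precompact Abelian $G$ is where the real work lies.
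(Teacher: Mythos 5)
Your route is the same as the paper's: upgrade the Cech complete Menger space $X$ to a $\sigma$-compact space via \cite[Theorem 1.2]{Tall2017}, observe that $\sigma$-compact implies Menger, and then try to transport the invariant from $X$ to $Y$ using the relationship between $\mathbb{T}$-equivalence and $G$-equivalence. You have also correctly put your finger on the one step that is not routine: Theorem \ref{t: T equivalence implies G equivalence} is stated as ``$\mathbb{T}$-equivalence implies $G$-equivalence,'' whereas the hypothesis hands you $G$-equivalence, so the cited implication points the wrong way and cannot, as stated, feed Lemma \ref{l:Menger property on T equivalence}. (For what it is worth, the paper's own proof invokes the theorem in exactly this reversed direction, so your diagnosis applies to it as well.)

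The problem is that you stop at the diagnosis. Your proposed repair --- ``show that for precompact Abelian $G$ the property of being $\sigma$-compact is already preserved under $G$-equivalence'' --- is precisely the content of the theorem modulo Tall's result, and you offer no argument for it; saying that this is ``where the real work lies'' concedes that the proof is not done. To close the gap you need one of two concrete inputs: either the converse of Theorem \ref{t: T equivalence implies G equivalence} for nontrivial precompact Abelian $G$ (so that $C_{p}(X,G)\cong C_{p}(Y,G)$ yields $C_{p}(X,\mathbb{T})\cong C_{p}(Y,\mathbb{T})$, after which Lemma \ref{l:Menger property on T equivalence} applies verbatim), or a direct proof that $\sigma$-compactness of $X$ is encoded in the topological group $C_{p}(X,G)$, in the spirit of Theorem \ref{t:Preservation of T equivalence}(4) but for general precompact Abelian $G$. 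Neither is supplied, so as written the proposal is an accurate plan with the decisive step missing rather than a proof.
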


\begin{proof}
Suppose $X$ and $Y$ are $G$-equivalent. Take $G =\mathbb{T}$, then by Lemma \eqref{l:Menger property on T equivalence}, $\mathbb{T}$ equivalence preserves Menger property. Therefore, $Y$ is a Menger space. From Theorem \eqref{t: T equivalence implies G equivalence}, in case of a precompact Abelian group $\mathbb{T}$-eqivalence implies $G$-equivalence. Hence, $Y$ is also a Menger space.
\end{proof}

\section{Monolithicity on $\cpxg$}\label{s:$\cpxg$ is a monolithic space for a compact space $X$}

\begin{theorem}\label{s: Network weight between X and cpxg}
Let $X$ be a topological space and $G$ be a topological group which satisfies second axiom of countability. Then $nw(X)=nw(\cpxg)$
\end{theorem}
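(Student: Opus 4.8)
The plan is to prove $nw(X) = nw(\cpxg)$ by establishing the two inequalities $nw(\cpxg) \le nw(X)$ and $nw(X) \le nw(\cpxg)$ separately. The central tool throughout is the classical relationship between the network weight of a space and that of its function space, which in the real-valued setting gives $nw(X) = nw(\cpx)$; the task here is to adapt that argument to a group-valued target $G$ satisfying the second axiom of countability. Since $G$ is second countable, it has a countable base $\{O_m : m \in \N\}$, and this countability is precisely what lets us control the contribution of the target group to the network.

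For the inequality $nw(\cpxg) \le nw(X)$, I would first set $\kappa = nw(X)$ and fix a network $\mathcal{N}$ for $X$ with $|\mathcal{N}| = \kappa$. The standard subbasic neighborhoods of $\cpxg$ have the form $W(x, O) = \{f : f(x) \in O\}$; more generally a basic open set is a finite intersection of these. I would construct a candidate network for $\cpxg$ by taking, for each finite collection of pairs $(J_i, O_{m_i})$ with $J_i \in \mathcal{N}$ and $O_{m_i}$ a basic open set of $G$, the set of all $f \in \cpxg$ with $f(J_i) \subseteq O_{m_i}$ for every $i$. Since there are $\kappa$ choices from $\mathcal{N}$ and only countably many basic open sets in $G$, the resulting family has cardinality at most $\kappa \cdot \omega = \kappa$ (using $\kappa \ge \omega$). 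The verification that this family is genuinely a network uses continuity of $f$ together with the network property of $\mathcal{N}$: given $f$ and a basic neighborhood, continuity pulls back each target basic open set to an open set in $X$ containing the relevant point, and $\mathcal{N}$ supplies a network element squeezed between them.

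For the reverse inequality $nw(X) \le nw(\cpxg)$, the natural route is via the evaluation map. Set $\lambda = nw(\cpxg)$ and fix a network $\mathcal{M}$ for $\cpxg$ with $|\mathcal{M}| = \lambda$. The idea is to use the evaluation $e : X \to \cpxg{}^{*}$ (or, more concretely, to exploit that $X$ embeds into $C_p(\cpxg, G)$-type objects) and transport the network of $\cpxg$ back to a network on $X$; the $G^{\ast\ast}$-regularity type separation available from the earlier definitions, together with second countability of $G$, is what guarantees that distinct points and closed sets of $X$ are separated by functions in $\cpxg$, so that the transported family actually detects the topology of $X$. Concretely, I would show that the map sending $x$ to the evaluation functional is a homeomorphic embedding onto its image, and that network weight is not increased by taking continuous images or subspaces, yielding $nw(X) \le nw(\text{image}) \le nw(\cpxg)$.

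The main obstacle I anticipate is the reverse inequality, specifically ensuring that the evaluation embedding works for a general group-valued setting: in the real-valued case one relies on $\cpx$ separating points and closed sets of a Tychonoff space, but here one must verify the analogous separation using the group structure, which is exactly where an assumption like $G$-regularity (or the ambient Tychonoff hypothesis combined with the structure of $G$) becomes essential. The countability of the base of $G$ is doing real work in keeping the cardinal bookkeeping clean on both sides, but the genuinely delicate point is confirming that enough continuous $G$-valued functions exist to recover the topology of $X$; once the embedding $X \hookrightarrow \cpxg$-dual is in hand, the monotonicity of $nw$ under subspaces and continuous maps finishes the argument routinely.
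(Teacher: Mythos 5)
Your proposal follows essentially the same route as the paper's proof: the inequality $nw(\cpxg)\le nw(X)$ is obtained in both cases from sets of the form $\{f\in\cpxg : f(J_i)\subseteq O_{m_i}\}$ built from a network of $X$ and a countable base of $G$ (with the cardinality bound $\kappa\cdot\omega=\kappa$), and the reverse inequality comes in both cases from the canonical evaluation embedding of $X$ into $C_p(C_p(X,G),G)$ together with monotonicity of network weight under subspaces. The obstacle you flag in the reverse direction --- that the evaluation map is an embedding only if $\cpxg$ separates points from closed sets of $X$, which requires some form of $G$-regularity not stated in the hypotheses --- is a genuine issue, but it is present equally in the paper's own argument, which simply asserts $X\subset C_p(\cpxg)$ without justification, so your attempt is at the same level of completeness as the published proof.
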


\begin{proof}
To prove this, first we will show that $nw(\cpxg)\leq nw(X)$. Fix a network $\mathcal{N}$ in $X$ and a countable base $\mathcal{B}$ in $G$. Choose $M_{1}, M_{2}, M_{3},\ldots, M_{\kappa}\in \mathcal{N} $ and $U_{1}, U_{2}, U_{3}, \ldots, U_{\kappa}\in \mathcal{B}$. We  define $W(M_{1}, M_{2}, M_{3}, \ldots, M_{\kappa}, U_{1}, U_{2}, U_{3}, \ldots,  U_{\kappa})=\{ f\in \cpxg\colon f(M_{i})\subset U_{i}, i =1, 2, 3, \dots, \kappa\}$. 

Let $N'= \{W(M_{1}, M_{2}, M_{3}, \ldots, M_{\kappa}, U_{1}, U_{2}, U_{3}, \ldots, U_{\kappa})\}$. Then we will show that $N'$ is a network of $\cpxg$. Since $\lvert N' \rvert \leq \lvert N \rvert$, it follows that $nw(\cpxg)\leq nw(X)$.

Let $f\in\cpxg$ and $V$ be the open neighbourhood of $f$ in $\cpxg$ i.e. $V=\{g\in \cpxg \colon g(x)\in U$ for some open set $U\in G\}$. Then, there exists open sets $U_{1}, U_{2}, U_{3}, \ldots, U_{\kappa}\in \mathcal{B}$ such that $U = \displaystyle\cup_{i=1}^{\kappa}U_{i}$. Since $f$ is continuous, there exists $M_{i}\in \mathcal{N}$ with $f(M_{i})\subset U_{i}$, for $i=1, 2, 3, \ldots, \kappa$ i.e. $f\in N'$. Now, we claim that $N'\subset V$. To prove this, let $g\in N'$, then $g\in \{W(M_{1}, M_{2}, M_{3}, \ldots, M_{\kappa}, U_{1}, U_{2}, U_{3}, \ldots, U_{\kappa}\}$,  i.e. $g\in \{ f\in \cpxg 
\colon f(M_{i})\subset U_{i}, i=1 ,2, 3, \ldots, \kappa\}$. Let $U=\displaystyle\cup_{i=1}^{\kappa}U_{i}$. Clearly $U$ is an open set and $g(x)\in U$, for each $x\in X$, i.e. $g\in V$. Thus $N'$ is a network of $\cpxg$. So $nw(\cpxg)\leq nw(X)$. To get the reverse inequality we use the fact that $X\subset C_{p}(\cpxg)$. So $nw(X)\leq nw( C_{p}(\cpxg))\leq nw(\cpxg)$.
\end{proof}

In the above theorem the second countability of the topological group $G$ is a sufficient condition. To see this consider the topological group $G =\mathbb{R}$ under addition with discrete topology. Any group with discrete topology is a topological group. An uncountable topological space with discrete topology cannot be a second countable space. So $G$ is a topological group which does not satisfy second countablity.

If we let $X = \mathbb{Z}$ be the set of integers with the topology induced from the usual topology of $\mathbb{R}$ and $nw(X) = \aleph_{0}$, then the space $\cpxg$ consists only of constant functions, and its cardinality is equal to that of $\mathbb{R}$, where $\mathbb{R}$ is considered with the discrete topology. Therefore, it does not have a countable network. Hence, $nw(\cpxg) \neq \aleph_{0}$

\begin{theorem}\label{t: continuity of dual map}
Let $f \colon X \to Y$ be a map and $G$ be a topological group. A map  $f^{\ast} \colon G^{Y} \to G^{X}$ defined $f^{\ast}(\phi)(x)=\phi(f(x))$ for $\phi\in G^{Y}$, then  $f^{\ast}$ is continuous.
\end{theorem}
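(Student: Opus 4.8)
The plan is to exploit the fact that both $G^{X}$ and $G^{Y}$ carry the product topology, which is exactly the topology of pointwise convergence, so that continuity of $f^{\ast}$ can be checked on a convenient subbase rather than on arbitrary open sets. Recall that the product topology on $G^{X}$ has as a subbase the collection of sets
\[
W(x, U) = \{ \psi \in G^{X} : \psi(x) \in U \},
\]
where $x$ ranges over $X$ and $U$ over open subsets of $G$; these are precisely the preimages $\pi_{x}^{-1}(U)$ of the coordinate projections $\pi_{x} \colon G^{X} \to G$. Since a map is continuous if and only if the preimage of every member of a fixed subbase of the target is open, it suffices to show that $(f^{\ast})^{-1}(W(x,U))$ is open in $G^{Y}$ for each such subbasic set.

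The key computation is then immediate from the definition of $f^{\ast}$. For $\phi \in G^{Y}$ we have $f^{\ast}(\phi)(x) = \phi(f(x))$, so
\[
(f^{\ast})^{-1}\bigl(W(x,U)\bigr) = \{ \phi \in G^{Y} : \phi(f(x)) \in U \} = W(f(x), U),
\]
which is itself a subbasic open set of $G^{Y}$, namely the preimage of $U$ under the projection onto the coordinate $f(x) \in Y$. Hence the preimage of every subbasic open set of $G^{X}$ is open in $G^{Y}$, and $f^{\ast}$ is continuous. Equivalently, one may argue by the universal property of the product: the composite $\pi_{x} \circ f^{\ast} \colon G^{Y} \to G$ equals the projection $\pi_{f(x)}$, which is continuous, and since a map into a product is continuous precisely when all of its coordinate functions are continuous, the continuity of $f^{\ast}$ follows at once.

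There is no genuine obstacle here; the only point requiring care is the bookkeeping of index sets. One must keep firmly in mind that $G^{X}$ is indexed by $X$ while $G^{Y}$ is indexed by $Y$, so that the $x$-th coordinate of $f^{\ast}(\phi)$ is governed by the $f(x)$-th coordinate of $\phi$, and that the relevant topology is the pointwise (product) one rather than any finer topology such as the uniform topology. It is worth noting that the argument uses neither any hypothesis on $f$ beyond its being a function nor the group operation on $G$; the result is purely a statement about continuity of precomposition with respect to product topologies, and it holds verbatim with $G$ replaced by any topological space.
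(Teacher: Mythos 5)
Your proof is correct and takes essentially the same approach as the paper's: both verify continuity of $f^{\ast}$ with respect to the pointwise (product) topology by reducing to the constraint at a single coordinate, where the key identity $(f^{\ast})^{-1}(W(x,U)) = W(f(x),U)$ does all the work. Your execution via subbasic sets and coordinate projections is in fact tighter than the paper's neighbourhood-chasing version, and your closing observation that no hypothesis on $f$ or on the group structure of $G$ is needed is accurate.
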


\begin{proof}
To prove $f^{\ast}$ is continuous, let $V$ be an open neighbourhood of $\psi \in G^{X}$ and suppose that $\psi=f^{\ast}(\phi)$ for some $\phi \in G^{Y}$. Then, $V=\{ g\in G^{X} \colon g(x)\in U$ for some open set $U\in G$ for each $x\in X\}$. Since $\psi \in V$, we have $\psi(x)\in U$ for some open set $U$ in $G$. Define $V'=\{h\in G^{Y}: h(x)\in U\}$. Clearly $V'$ is an open set in $G^{Y}$.

We will prove that $f^{\ast}(V')\subset V$. Let $\theta\in f^{\ast}(V')$, then $\theta=f^{\ast}(h)$ for some $h\in V'$, i.e. $\theta(x)=h(f(x))$, and $h(y)\in U$ for some $y\in Y$. That is, $\theta(x)\in U$. So $\theta \in V$. Therefore, $f^{\ast}$ is continuous.
\end{proof}

\begin{theorem}\label{u: Homeomorphism between Gy to Gx}
Let $f:X\rightarrow Y$ be a map and $G$ be a topological group. If $f(X)=Y$, then $f^{\ast}:G^{Y}\rightarrow G^{X}$ is a homeomorphism from $G^{Y}$ onto the closed subspace $f^{\ast}(G^{Y})$ of $G^{X}$.
\end{theorem}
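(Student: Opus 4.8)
The plan is to establish three things in turn: that $f^{\ast}$ is injective, that it is an open map onto its image (hence an embedding giving a homeomorphism onto $f^{\ast}(G^{Y})$), and finally that $f^{\ast}(G^{Y})$ is closed in $G^{X}$. Continuity of $f^{\ast}$ is already available from Theorem \ref{t: continuity of dual map}, so the work lies in the remaining properties.

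First I would prove injectivity. Suppose $f^{\ast}(\phi_{1}) = f^{\ast}(\phi_{2})$; then $\phi_{1}(f(x)) = \phi_{2}(f(x))$ for every $x \in X$. Because $f(X) = Y$, every $y \in Y$ is of the form $f(x)$, so $\phi_{1}(y) = \phi_{2}(y)$ for all $y$, i.e. $\phi_{1} = \phi_{2}$. This is the point at which surjectivity of $f$ is essential.

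Next, to obtain that $f^{\ast}$ is a homeomorphism onto its image, I would show that it carries relatively open sets to relatively open sets. It suffices to treat the subbasic open sets of the product (pointwise convergence) topology on $G^{Y}$, namely $S(y_{0}, U) = \{h \in G^{Y} \colon h(y_{0}) \in U\}$ with $U$ open in $G$. Choosing any $x_{0} \in X$ with $f(x_{0}) = y_{0}$ (possible by surjectivity), a direct computation gives $f^{\ast}(S(y_{0}, U)) = \{g \in G^{X} \colon g(x_{0}) \in U\} \cap f^{\ast}(G^{Y})$, which is relatively open. Since $f^{\ast}$ is injective it commutes with finite intersections, so the image of every basic open set is relatively open; hence $f^{\ast}$ is open onto $f^{\ast}(G^{Y})$, and being a continuous open bijection onto its image it is a homeomorphism onto $f^{\ast}(G^{Y})$.

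The main obstacle is showing the image is closed. Here I would first identify the image intrinsically: $g \in f^{\ast}(G^{Y})$ if and only if $g$ is constant on the fibers of $f$, that is, $g(x_{1}) = g(x_{2})$ whenever $f(x_{1}) = f(x_{2})$. The forward direction is immediate from $g = \phi \circ f$; for the converse, surjectivity of $f$ lets one define $\phi(y) := g(x)$ for any $x$ in the fiber over $y$, well-defined precisely because $g$ is constant on fibers, and then $f^{\ast}(\phi) = g$. With this description, for each pair $(x_{1}, x_{2})$ with $f(x_{1}) = f(x_{2})$ the set $\{g \in G^{X} \colon g(x_{1}) = g(x_{2})\}$ is the preimage of the diagonal of $G \times G$ under the continuous evaluation map $g \mapsto (g(x_{1}), g(x_{2}))$; since $G$ is Hausdorff this diagonal is closed, so each such set is closed, and $f^{\ast}(G^{Y})$, being the intersection of all of them, is closed in $G^{X}$. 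I expect the subtle point to be keeping track that $G^{X}$ and $G^{Y}$ denote all functions (not merely the continuous ones), so that the fiber-constancy description of the image is exactly correct and the inverse function $\phi$ genuinely lies in $G^{Y}$.
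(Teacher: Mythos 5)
Your proof is correct, and on the two substantive points it takes a genuinely different (and more complete) route than the paper. The injectivity step is the same in both. For the homeomorphism onto the image, the paper simply asserts that $(f^{\ast})^{-1}$ is continuous ``in a similar way'' to the continuity of $f^{\ast}$, whereas you give an actual argument: the identity $f^{\ast}\bigl(S(y_{0},U)\bigr)=\{g\colon g(x_{0})\in U\}\cap f^{\ast}(G^{Y})$ for any $x_{0}\in f^{-1}(y_{0})$, plus the fact that an injection commutes with finite intersections, shows $f^{\ast}$ is open onto its image; this is the detail the paper leaves out. The divergence is sharpest on closedness. The paper proposes to show that the complement of $f^{\ast}(G^{Y})$ in $G^{X}$ is \emph{empty}, which is not what closedness requires (and is false in general: $f^{\ast}$ is not onto $G^{X}$ unless $f$ is injective), and the ensuing ``contradiction'' does not actually establish anything; that part of the paper's proof does not work as written. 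Your argument is the standard correct one: characterize the image as the functions constant on the fibers of $f$ (using surjectivity to see the converse inclusion), and write it as an intersection of sets of the form $\{g\colon g(x_{1})=g(x_{2})\}$, each closed because it is the preimage of the diagonal of $G\times G$ under the continuous evaluation $g\mapsto(g(x_{1}),g(x_{2}))$ and $G$ is Hausdorff (an assumption the paper makes globally). What your approach buys is a proof of closedness that actually goes through, at the modest cost of invoking the Hausdorff hypothesis explicitly; your closing remark that $G^{X}$ and $G^{Y}$ consist of \emph{all} functions is exactly the point that makes the fiber-constancy description of the image correct.
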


\begin{proof}
Let $f(X)=Y$. To prove $f^{\ast}$ is a homeomorphism, first we will show that $f^{\ast}$ is a bijective function. To see this, let $\theta_{1}, \theta_{2}\in G^{Y}$ and assume that $\theta_{1}\neq \theta_{2}$, then there exists $y\in Y$ such that $\theta_{1}(y)\neq\theta_{2}(y)$. Since $f(X)=Y$, corresponding to each $y\in Y$ there exists $x\in X$ such that $f(x)=y$. So, $f^{\ast}(\theta_{1})(x)=\theta_{1}(f(x))=\theta_{1}(y)\neq \theta_{2}(y)=\theta_{2}(f(x))= f^{\ast}(\theta_{2})(x)$, i.e. $f^{\ast}$ is one-one. Therefore, $f^{\ast}$ is a bijective function from $G^{Y}$ to $f^{\ast}(G^{Y})$.
In a similar way of the proof of Theorem $\eqref{t: continuity of dual map}$ we can easily show that $(f^{\ast})^{-1}$ is continuous. Therefore, $f^{\ast}$ is a homeomorphism from $G^{Y}$ to $f^{\ast}(G^{Y})$.

Next we will show that $f^{\ast}(G^{Y})$ is a closed set. To prove this, it is enough to show that $(f^{\ast}(G^{Y}))^{\complement}$ is an empty set. Suppose $\theta \in (f^{\ast}(G^{Y}))^{\complement}$. Then there does not exist $\psi \in G^{Y}$ such that $f^{\ast}(\psi)=\theta$.
That is, there does not exist $x\in X$ such that $\psi(f(x))=\theta(x)$. But it is a contradiction to our assumption that $f(X)=Y$. Hence $f^{\ast}(G^{Y})$ is a closed subspace of $G^{X}$.
\end{proof}

Note that in case of $G=\mathbb{R}$, $G^{\ast}$-regular space can be replaced by completely regular.

\begin{definition}[$G$-quotient map]
Let $f\colon X \to Y$ be a  map from a topological space $X$ onto a set $Y$ and $G$ be any topological group. Then the strongest of all $G^{\ast}$-regular topologies on $Y$ relative to which $f$ is continuous is called the $G$-quotient topology on the set $Y$. A map from a space $X$ onto a space $Y$ is called a $G$-quotient map if the topology on $Y$ coincide with the topology generated by $f$.
\end{definition}

\begin{example}
In case of $\mathbb{R}$ with usual topology the $G$-quotient map will coincide with the $\mathbb{R}$-quotient map and topology generated by $G$-quotient map is Real-quotient topology.
\end{example}

\begin{theorem}
Let $X$ be a compact space and $G$ be a topological group which satisfies second axiom of countability. Then the space $\cpxg$ is monolithic.
\end{theorem}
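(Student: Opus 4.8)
The plan is to verify $\tau$-monolithicity of $\cpxg$ for every infinite cardinal $\tau$ directly from the definition: given a subspace $Y \subseteq \cpxg$ with $\lvert Y \rvert \le \tau$, I must exhibit a network of cardinality at most $\tau$ for $\overline{Y}$. The key idea is to factor every function in $\overline{Y}$ through a single compact space of small weight, obtained by evaluating the members of $Y$, and then to import a bound on network weight through Theorem \eqref{s: Network weight between X and cpxg}.

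First I would introduce the evaluation map $e \colon X \to G^{Y}$ given by $e(x)(f) = f(x)$. Since every $f \in Y$ is continuous, $e$ is continuous, so $Z := e(X)$ is a compact, hence Tychonoff, subspace of $G^{Y}$. Because $G$ is second countable and $\lvert Y \rvert \le \tau$, we have $w(Z) \le w(G^{Y}) \le \lvert Y \rvert \cdot w(G) \le \tau$, and therefore $nw(Z) \le \tau$.

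The central step is to show that each $g \in \overline{Y}$ factors as $g = \tilde{g}\circ e$ for a \emph{continuous} $\tilde{g} \colon Z \to G$. I would first argue that $g$ is constant on the fibres of $e$: if $e(x_{1}) = e(x_{2})$ but $g(x_{1}) \ne g(x_{2})$, choose disjoint neighbourhoods of $g(x_{1})$ and $g(x_{2})$ in the Hausdorff group $G$; the corresponding basic open set of $\cpxg$ contains $g$, hence meets $Y$, producing an $f \in Y$ with $f(x_{1}) \ne f(x_{2})$, contradicting $e(x_{1}) = e(x_{2})$. Thus $\tilde{g}$ is well defined, and since $e$ is a continuous surjection from the compact space $X$ onto the Hausdorff space $Z$ it is a quotient map, so continuity of $g = \tilde{g}\circ e$ forces continuity of $\tilde{g}$; that is, $\tilde{g} \in C_{p}(Z,G)$.

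Consequently $\overline{Y}$ is contained in the image of the embedding $e^{\ast} \colon C_{p}(Z,G) \to \cpxg$, which by Theorem \eqref{u: Homeomorphism between Gy to Gx} is a homeomorphism onto its image. Hence $\overline{Y}$ is homeomorphic to a subspace of $C_{p}(Z,G)$, and since network weight is hereditary, Theorem \eqref{s: Network weight between X and cpxg} yields $nw(\overline{Y}) \le nw(C_{p}(Z,G)) = nw(Z) \le \tau$. As $\tau$ was an arbitrary infinite cardinal, $\cpxg$ is $\tau$-monolithic for every $\tau$, i.e.\ monolithic. The main obstacle is the factorization step: securing continuity of $\tilde{g}$ needs both the Hausdorffness of $G$, to separate images and force constancy on fibres, and the compactness of $X$, to guarantee that $e$ is a quotient map; without compactness the quotient argument collapses and $\tilde{g}$ need not be continuous, so the reduction to $C_{p}(Z,G)$ would break down.
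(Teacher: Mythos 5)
Your proof is correct, and while it starts from the same move as the paper --- the diagonal/evaluation map $e\colon X\to G^{Y}$ sending $X$ onto a subspace $Z$ of $G^{Y}$ of weight at most $\tau$, followed by the dual embedding $e^{\ast}$ and the identity $nw(C_{p}(Z,G))=nw(Z)$ from Theorem \eqref{s: Network weight between X and cpxg} --- it then diverges in how it gets $\overline{Y}$ inside a copy of $C_{p}(Z,G)$. The paper retopologizes the image with the $G$-quotient topology to get a space $Y'$, bounds $iw(Y')$ via the condensation $i\colon Y'\to Y$, invokes the $\tau$-stability of the compact space $X$ to conclude $nw(Y')\le\tau$, and then argues that $M$ lands in the closed subspace $F=f^{\ast}(C_{p}(Y',G))$ so that $\overline{M}\subset F$. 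You instead prove directly that every $g\in\overline{Y}$ factors continuously through $Z$: constancy on fibres of $e$ follows from Hausdorffness of $G$ together with $g$ being a pointwise limit point of $Y$, and continuity of the induced $\tilde{g}$ follows because $e$, being a continuous surjection from a compact space onto a Hausdorff one, is closed and hence a quotient map. This buys you two simplifications: you never need the $G$-quotient topology, $i$-weight, or the stability of compact spaces (the step of the paper's proof that is stated most loosely), and you never need the closedness of the image of $e^{\ast}$ in $\cpxg$, since you show $\overline{Y}$ itself is contained in that image. The trade-off is that your argument leans on compactness in an essential and localized way (to make $e$ a quotient map), whereas the paper's stability-based route is the one that generalizes to the broader classes of stable spaces usually treated in $C_{p}$-theory.
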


\begin{proof}
To prove $\cpxg$ is monolithic, let $M\subset \cpxg$ and $\lvert M \rvert\leq \tau$. Let $f$ be the diagonal product map of maps from $M$. Thus, $f(x) = \{ x_{g}=g(x): g\in M\}$. Let $Y=f(X)$, then $Y$ is a subspace of $G^{M}$. Therefore, $w(Y) \leq \lvert M \rvert \leq \tau$.

Let $Y'$ be the points in $Y$ with the $G$-quotient topology generated by the map $f$. Then define an identity map $i$ from $Y'$ to $Y$, making it a condensation map. So, $iw(Y')\leq w(Y)\leq \tau$. Since $X$ is a compact space, it is stable, hence it is $\tau$-stable for every infinite cardinal $\tau$. Thus, there exists a continuous function from $X$ to $Y'$, implying $nw(Y')\leq \tau$. Also, from Theorem $\eqref{s: Network weight between X and cpxg}$, we have $nw(C_{p}(Y',G))=nw(Y')\leq \tau$. Consider the map $f\colon X \to Y'$, which is evidently a $G$-quotient map. Then define $f^{\ast} \colon C_{p}(Y',G) \to \cpxg$ such that $f^{\ast}= i^{-1}\circ f$. Therefore, by Theorem \eqref{u: Homeomorphism between Gy to Gx}, $C_{p}(Y',G)$ is homeomorphic to the closed subspace $F= f^{\ast}(C_{p}(Y',G))$ of $\cpxg$.

We have $f^{\ast}= i^{-1}\circ f$, so $f = i \circ f^{\ast}$. Then for every $g\in M$, we can write $g=p_{g}\circ i \circ f^{\ast}$, where $p_{g}$ is a projection mapping from $G^{M}$ to $G$. Since projection mapping on a topological group is continuous \cite{THusain1964}, $p_{g}\circ i$ is continuous from $Y'$ to $G$, thus $p_{g}\circ i\in C_{p}(Y',G)$. This implies $g\in F$. Hence, $M\subset F$. Since $F$ is closed, $\overline{M}\subset \overline{F}=F$. $nw(\overline{M})\leq nw(F)=nw(C_{p}(Y',G))$ as $C_{p}(Y',G)$ is homeomorphic to $F$. Therefore, $nw(\overline{M})\leq \tau$. Consequently, $\cpxg$ is a monolithic space.

\end{proof}

\bibliographystyle{amsplain}
\bibliography{PropertiesOfSpaceOfGroupValuedContinuousFunctionsBib}
\end{document}